\newcommand{\Forgetful}{{\operatorname{Forgetful}}}
\newcommand{\TV}{{\operatorname{TV}_\cat}}
\renewcommand{\Vec}{{{\mathcal V}\!\textrm{ect}_\kk}}
\newcommand{\Kuni}{\mathcal K}
\newcommand{\Kup}{\operatorname{Ku}}
\newcommand{\IP}{{\mathbf O_{\Po}}}
\newcommand{\Hb}{{\mathscr H}_b}
\newcommand{\Po}{{{\mathbb P}}}
\newcommand{\dd}{{\wt d}}
\newcommand{\tcoev}{\stackrel{\longrightarrow}{\operatorname{coev}}}
\newcommand{\tev}{\stackrel{\longrightarrow}{\operatorname{ev}}}
\newcommand{\ev}{\stackrel{\longleftarrow}{\operatorname{ev}}}
\newcommand{\coev}{\stackrel{\longleftarrow}{\operatorname{coev}}}
\newcommand{\rcoev}{\stackrel{\longrightarrow}{\operatorname{coev}}}
\newcommand{\rev}{\stackrel{\longrightarrow}{\operatorname{ev}}}
\newcommand{\lev}{\stackrel{\longleftarrow}{\operatorname{ev}}}
\newcommand{\lcoev}{\stackrel{\longleftarrow}{\operatorname{coev}}}
\newcommand{\brk}[1]{{\left\langle{#1}\right\rangle}}
\newcommand{\FK}{{\Bbbk}}
\newcommand{\qt}{\operatorname{\mathsf{t}}}
\newcommand{\Int}{\operatorname{Int}}
\newcommand{\unit}{\mathds{1}}
\newcommand{\cat}{\mathscr{C}}
\newcommand{\Id}{\operatorname{Id}}
\newcommand{\bp}[1]{{\left(#1\right)}}
\newcommand{\End}{\operatorname{End}}
\newcommand{\Hom}{\operatorname{Hom}}
\newcommand{\R}{\ensuremath{\mathbb{R}} }
\newcommand{\wt}{\widetilde}
\newcommand{\ms}[1]{\mbox{\tiny$#1$}}
\newcommand{\I}{\mathcal I}
\newcommand{\ideal}{\I}
\newcommand{\Proj}{{{\operatorname{\mathsf{Proj}}}}}
\newcommand{\kk}{\Bbbk}
\newcommand{\mt}{\mathsf t}
\newcommand{\qdim}{\operatorname{qdim}}
\renewcommand{\SS}{\Sigma}
\newcommand{\Rib}{\operatorname{Rib}}
\newcommand{\LL}{\mathcal{L}}
\newcommand{\Hp}{\mathscr{H}_{\I}}
\newcommand{\Ho}{{\ensuremath{A}}}
\newcommand{\G}{{\Gamma}}
\newcommand{\cut}{{\operatorname{cut}}}
\newcommand{\pathFig}{}
\newcommand{\epsh}[2]
         {\begin{array}{c} \hspace{-1.3mm}
        \raisebox{-4pt}{\epsfig{figure=\pathFig#1,height=#2}}
        \hspace{-1.9mm}\end{array}}
\newtheorem{theorem}{Theorem}[section]
\newtheorem{proposition}[theorem]{Proposition}
\newtheorem{lemma}[theorem]{Lemma}
\newcounter{IntroCounter}
\theoremstyle{definition}
\newtheorem{definition}[theorem]{Definition}
\theoremstyle{remark}
\newtheorem{remark}[theorem]{Remark}
\newtheorem{claim}[theorem]{Claim}
\newcounter{exo} \newcounter{numexercice}
\renewcommand{\theexo}{\arabic{exo}}
\begin{document}
\title[Invariants in Unimodular Categories]{Kuperberg and Turaev-Viro Invariants in Unimodular Categories}

\author[F. Costantino]{Francesco Costantino}
\address{Institut de Math\'ematiques de Toulouse\\
118 route de Narbonne\\
 Toulouse F-31062}
\email{francesco.costantino@math.univ-toulouse.fr}

\author[N. Geer]{Nathan Geer}
\address{Mathematics \& Statistics\\
  Utah State University \\
  Logan, Utah 84322, USA}
\thanks{This work is supported by the NSF FRG Collaborative Research Grant DMS-1664387.  }\
\email{nathan.geer@gmail.com}

\author[B. Patureau-Mirand]{Bertrand Patureau-Mirand}
\address{UMR 6205, LMBA, universit\'e de Bretagne-Sud, universit\'e
  europ\'eenne de Bretagne, BP 573, 56017 Vannes, France }
\email{bertrand.patureau@univ-ubs.fr}

\author[V. Turaev]{Vladimir Turaev}
\address{Department of Mathematics \\
  Indiana University \\
  Rawles Hall, 831 East 3rd st \\
  Bloomington, IN 47405, USA}
\email{vtouraev@indiana.edu}

\begin{abstract}
  We give a categorical setting in which Penrose graphical calculus
  naturally extends to graphs drawn on the boundary of a handlebody.  We use it to
  introduce invariants of 3-manifolds presented by Heegaard
  splittings. 
  We recover Kuperberg invariants when the category arises
  from an involutory Hopf algebra and Turaev-Viro invariants when the
  category is semi-simple and spherical.
\end{abstract}

\maketitle
\setcounter{tocdepth}{1}
\tableofcontents
\setcounter{tocdepth}{3}

\section{Introduction}
A remarkable achievement of  the low-dimensional topology in the last 30 years was a discovery of   deep relations between   topology  and the theory of monoidal (tensor) categories. This development was initiated by V. Jones' introduction of his famous knot polynomial;  by now it englobes many aspects of low-dimensional topology including 3-manifold invariants, representations of mapping class groups of surfaces, topological quantum field theories in dimensions 2 and 3, etc.
 In particular, it was shown that monoidal categories satisfying certain conditions and carrying appropriate additional structures give rise to topological invariants of 3-dimensional manifolds, see \cite{Tu, TuV}.  This  has instigated extensive research in the theory of monoidal categories aiming at construction (and eventually classification) of monoidal categories with required properties.  
At the same time, this development has inspired  a number of parallel  approaches not involving  monoidal categories but  using related algebraic objects. One such approach  is due to G. Kuperberg \cite{Ku90} who derived invariants of 3-manifolds from  involutory Hopf algebras. The initial aim of this paper was to recover Kuperberg's invariants in terms of monoidal categories. To this end  we introduce    a  new construction of 3-manifold invariants from monoidal categories. We show that our method produces both the Kuperberg invariants and  the standard Turaev-Viro invariants.  Other generalizations of Kuperberg invariants   were considered by Kashaev and Virelizier in \cite{KV}. 

The first main result of our paper is a construction -  in a general categorical setting - of  an invariant of graphs on the boundary of a handlebody.
With some additional categorical structure we extend this invariant to bichrome graphs  which  split as the union of two subgraphs, the
blue and the red. 
Here the red part has the following key property: any edge of the graph can be slid over a red curve.  Our definition of a 3-manifold invariant  uses Heegaard decompositions of 3-manifolds 
as unions of two handlebodies glued along their common boundary.
Such a decomposition is 
encoded via
a red-colored set of disjoint simple closed curves on the boundary of one  handlebody forming  a complete set of meridians for the  second handlebody.

On the algebraic side, we use two main tools - the modified traces
(m-traces) and the chromatic morphisms. The m-traces generalize the
usual trace of endomorphisms of objects of a monoidal category to
situations where the standard trace
vanishes. The m-traces first
appeared in \cite{GKP1,GKP2, GPV} and have been successfully used to
produce 3-manifold invariants, see for example \cite{BBG17a, CGP14,
  GPT2}.  The chromatic morphisms are introduced
here.

\section{Main results and open problems}\label{Main results and open problems}

\subsection{The invariant $F'$}
We start with    notation used in the paper,
for    details see Section \ref{S:AlgSetup}.  Let $\cat$ be a pivotal $\FK$-category, where $\kk$ is a field.
Let~$F$ be the Penrose functor (defined using the Penrose graphical calculus) from the category of planar $\cat$-colored ribbon graphs to $\cat$, see for example \cite{GPV}.
Let~$\mt$ be a
modified trace (for shorteness, an m-trace) on an ideal $\I$ in~$\cat$.  The trace~$\mt$ induces an
invariant~$F'$ of $\I$-colored spherical graphs, see \cite{GPV}.  This invariant can be computed by composing $\mt$ and~$F$ on   cutting
presentations of the graphs (see Formula ~\eqref{E:DefF'}).

\renewcommand{\Po}{G}
We assume the m-trace $\mt$ to be  \emph{non-degenerate} in the sense that for any object
$P\in\I$, the pairing 
$$
\Hom_\cat(\unit, P) \times \Hom_\cat(P,\unit) \to \kk , \ \   (x,y)\mapsto \mt_P(xy)
$$
 is non-degenerate.  
Pick  a basis $\{x_i\}$ of $\Hom_\cat(\unit, P)$ and  let $\{y_i\}$ be the dual basis of
$\Hom_\cat(P,\unit)$ with respect to the above pairing.  
Set 
\begin{equation}\label{E:DefOmegaP}  \Omega_P= \sum_i x_i\otimes_\kk y_i  \in\Hom_\cat(\unit,P) \otimes_\kk \Hom_\cat(P, \unit )  
\end{equation}
where $\otimes_\kk$ is the usual   tensor product of vector spaces over $\kk$.
It is a standard fact from linear algebra that  $\Omega_P$ does not dependent on the choice of the basis
$\{x_i\}$ of $\Hom_\cat(\unit, P)$.

By a \emph{multi-handlebody} we   mean a disjoint union of a finite number of oriented  3-dimensional handlebodies.
  A \emph{$\cat$-colored ribbon graph} on a multi-handlebody~$H$ is a finite graph embedded  in $\partial H$  whose every  edge is colored with an object of~$\cat$ and every vertex is thickened in  $\partial H$  to a coupon colored with a
morphism of~$\cat$.   All coupons have a top and a bottom sides which in the  pictures will be the horizontal sides.  Since our graphs are drawn on a surface they have a natural framing and therefore can be considered as ribbon graphs in the usual sense.   When all the colors of  a $\cat$-colored ribbon graph are
in the ideal $\I$ we say that  the graph is  \emph{$\I$-colored}.  Let 
$ \Hp$ be the class of all pairs $(H,\Gamma)$ where $H$ is a multi-handlebody and $\Gamma$ is a  non-empty $\I$-colored graph on  $H$.   In the sequel  we extend the
colorings of coupons multilinearly.  In particular, this allows us to color an
ordered matching pair of coupons with the morphism
$\Omega_P=\sum_i x_i\otimes_\kk y_i $ as above.
We represent such a pair of  coupons with their adjacent edges  by the following figure:
\begin{equation}\label{E:OmegaDefMuilt}
\epsh{fig1}{20ex}=\sum_i\epsh{fig2}{20ex}\put(-92,-1){\ms{x_i}}\put(-37,5){\ms{y_i}}\;.
\end{equation}

We now  define a cutting operation  on colored graphs.
Let $(H,\Gamma)\in \Hp$ and let $D\subset H$ be an oriented properly embedded
disc  whose boundary
$\partial D \subset \partial H$ does not  meet  the coupons of $\Gamma$ and  intersects the edges of $\Gamma$
  transversely in a  non-empty set.
Cutting $(H,\Gamma)$ along~$D$ we obtain  a new multi-handlebody graph
$(\cut_D (H),\cut_D(\Gamma)) \in \Hp$ where $\cut_D(\Gamma)$ is obtained by  cutting the edges of
$\Gamma$ intersecting $\partial D$  and then joining the cut points into two new coupons in $\partial (\cut_D (H))$  (one  on each side of the
cut). The coupons are  colored  as in Formula \eqref{E:OmegaDefMuilt}, see the following figure:
\begin{equation*}\label{E:CuttingPropF'}
\epsh{fig5-Francois}{14ex}\hspace{5pt} \longrightarrow \hspace{5pt} \epsh{fig6-Francois}{30ex}.
\end{equation*}
Note that $H$ and $\cut_D (H)$ can have different numbers of connected components and the orientation of $\cut_D (H)$ is induced by the one of $H$. The  cutting of $(H, \Gamma)$ at~$D$ is not determined by these data uniquely as it  depends on the order in the set $\Gamma \cap \partial D$ compatible with the cyclic order in the oriented circle~$\partial D$.

  Below we define an invariant  $F'$ of $\ideal$-colored  spherical graphs, see Formula~\eqref{E:DefF'}.
The following theorem (proved in Section \ref{S:InvOfProjHandlebodies}) extends~$F'$    to  $\Hp $.
\begin{theorem}\label{P:F'onProjHandlebodies}
Let $\cat$ be a pivotal $\FK$-category equipped with an ideal $\ideal$ in $\cat$  and a non-degenerate m-trace on   $\ideal$.
Then there exists a unique mapping $F':\Hp\to \FK$ 
satisfying the following four conditions.
\begin{description}
\item[(1) Invariance]
The element  $F'(H,\Gamma)$ of $\FK$  depends only on the orientation preserving diffeomorphism class of  $(H,\Gamma)\in \Hp$.
 
\item[(2) Spherical case] For any $\I$-colored ribbon graph $(B^3,\Gamma)$ 
on the 3-ball $B^3$, we have
 $
F'(B^3,\Gamma)=F'(\Gamma).$

\item[(3) Disjoint union] For any $(H_1,\Gamma_1), (H_2,\Gamma_2)\in \Hp$
we have $$F'(H_1\sqcup H_2,\Gamma_1 \sqcup \Gamma_2)=F'(H_1,\Gamma_1) F'(H_2,\Gamma_2).$$

\item[(4) Cutting] Cutting any $(H,\Gamma) \in \Hp$ along a 2-disc~$D$ as above, we always have 
  $F'(\cut_D(H),\cut_D(\Gamma))=F'(H,\Gamma)$.
\end{description}
\end{theorem}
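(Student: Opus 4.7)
The plan is to establish uniqueness first, then construct $F'$ and verify well-definedness.

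\textbf{Uniqueness.} For any $(H,\Gamma) \in \Hp$, choose a system $D_1, \ldots, D_g \subset H$ of disjoint properly embedded discs cutting $H$ into a disjoint union of 3-balls. After an isotopy of $\Gamma$ in $\partial H$ one can arrange that each $\partial D_i$ meets $\Gamma$ transversely and non-trivially; this is possible since $\Gamma$ is non-empty, up to sliding discs so that an innermost one intersects some edge. Iteratively applying (4), then (3) and (2), reduces $(H,\Gamma)$ to a disjoint union of spherical graphs on 3-balls whose $F'$-values are already known. Hence conditions (1)--(4) force a unique value of $F'(H,\Gamma)$.

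\textbf{Existence.} Define $F'(H,\Gamma)$ by the above cutting recipe. Well-definedness requires independence from all choices. Changing the cyclic base point in $\Gamma \cap \partial D$ or the orientation of $D$ corresponds to a cyclic rotation or a reversal of the sum $\Omega_P = \sum_i x_i \otimes y_i$; both leave the spherical $F'$-value invariant, the former because $F'$ of a spherical graph is cyclically symmetric around coupons, the latter because $\Omega_P$ is intrinsically characterized (independently of which side is labeled top or bottom) by the non-degenerate pairing $(x,y)\mapsto \mt_P(yx)$. The order of two disjoint cuts is immaterial since the resulting graphs agree on the nose, and isotoping a cutting disc yields an isotopic spherical graph after cutting, whose $F'$-value is unchanged by invariance in the already-proven spherical case.

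\textbf{The main obstacle} is independence from the choice of complete cutting system. Here one invokes the classical fact that any two maximal systems of compressing discs for a handlebody are related, up to isotopy, by a finite sequence of \emph{disc slides}, each replacing some $D_i$ by a band-sum of $D_i$ with a parallel copy of $D_j$ along an arc in $\partial H$. Invariance under a slide reduces to an algebraic identity: sliding one $\Omega_P$-coupon-pair across another $\Omega_Q$-coupon-pair along an arc yields the same value of spherical $F'$. This identity is precisely where the non-degeneracy of the m-trace is essential; the dual bases $\{x_i\}, \{y_i\}$ encode a partial-trace identity via $\mt_P$ that absorbs the slide, in the spirit of Kirby-slide identities for modular categories.

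\textbf{Verification of the axioms.} Once well-definedness is in place, (1) follows since the recipe uses only diffeomorphism-invariant data; (2) holds by taking the empty cutting system when $H=B^3$, so the recipe returns $F'(\Gamma)$; (3) is immediate since a cutting system for $H_1 \sqcup H_2$ is a disjoint union of cutting systems for the components and (2) is multiplicative on disjoint unions; and (4) holds because any complete cutting system for $\cut_D(H)$ together with $D$ itself forms a complete cutting system for $H$ that produces the same reduced spherical graph in both recipes.
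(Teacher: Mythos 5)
Your architecture differs genuinely from the paper's: you cut along a complete meridian system all at once and then try to compare any two such systems via disc slides, whereas the paper inducts on the genus of the multi-handlebody, cutting along a \emph{single} essential disc at each step. In the paper, independence of the chosen disc is handled by first observing that two \emph{disjoint} essential discs commute (this uses only the already-established Cutting property in lower genus, applied in either order), and then connecting an arbitrary pair of essential discs by a chain $D_0,\dots,D_n$ with $D_i\cap D_{i+1}=\emptyset$, citing Masur--Minsky for the connectivity of this complex. That route never needs a disc-slide identity at all; the only algebraic input is Proposition \ref{P:alg}(\ref{I:Palg-fg}), i.e.\ $\mt_P(fg)=\sum_i\mt_P(fy_i)\,\mt_P(x_ig)$, which verifies Cutting in the genus-zero base case.

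The genuine gap in your version is exactly at the step you label the main obstacle. Invariance of the cut-open spherical evaluation under a disc slide is not an identity you can wave at: it is the entire content of well-definedness, and you give no argument for it beyond an appeal to ``the spirit of Kirby-slide identities for modular categories.'' That analogy is misplaced here --- Kirby slides live in a ribbon/modular setting with a Kirby color, while in this paper the ability to slide an edge over a circle is precisely the \emph{extra} structure supplied later by the chromatic morphism (Theorem \ref{T:ExtenFtoRedBlue}, Proposition \ref{prop:sliding}); it is not a formal consequence of non-degeneracy of the m-trace. Sliding $D_i$ over $D_j$ replaces the colour $P_i$ of one coupon pair by something like $P_i\otimes P_j$ and entangles the two $\Omega$-tensors, and showing the two spherical evaluations agree requires an actual computation (or a reduction, e.g.\ cutting along $D_j$ first and then isotoping $D_i'$ back to $D_i$ across the new coupons, which again needs justification). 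A secondary issue: Condition (4) allows $D$ to be separating or even inessential, so ``$D$ together with a complete system for $\cut_D(H)$'' need not be a meridian system of the type covered by the classical disc-slide theorem, and your verification of (4) inherits this gap. I would either supply the slide identity in full or restructure along the paper's one-disc-at-a-time induction, where only the disjoint-disc commutation and the connectivity statement are needed.
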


The assumptions of Theorem~\ref{P:F'onProjHandlebodies} are rather mild.  By Theorem 5.5 of  \cite{GKP3}
(see also \cite{GKP2}) the categories arising  in   the following settings satisfy these assumptions:  representations of factorizable ribbon Hopf algebras, finite groups and their quantum doubles, Lie (super)algebras, the $(1,p)$ minimal model in conformal field theory, and
quantum groups at a root of unity.

Following the ideas of \cite{DGP}, we futher extend~$F'$ to so-called bichrome  handlebody graphs.  A \emph{bichrome handlebody graph} is a graph on the
boundary of a multi-handlebody
which is split as a disjoint union of two subgraphs, the 
blue and the red.  The blue subgraph is supposed to be $\cat$-colored while the red subgraph is supposed to be a  disjoint union of simple closed unoriented curves (which are not required to be $\cat$-colored).
We refer to these curves as \emph{red  circles}.   We call a bichrome handlebody graph   \emph{admissible}
if its blue subgraph is $\I$-colored and   meets each connected component
of the multi-handlebody at a non-empty set.

\begin{definition}\label{D:PurpleMap}  Let $G$ be an object of $\ideal$.  
  Set $\Lambda=\sum_i x_iy_i\in\End_\cat(G\otimes G^*)$ where
  $\Omega_{G\otimes G^*}=\sum x_i\otimes_\kk y_i$.  A \emph{chromatic morphism} for $G$
  is a morphism $\dd:G\otimes G\to G\otimes G$ such that
   \begin{equation}\label{E:HypothOfdd}
   (\Id_G\otimes\lev_G\otimes \Id_G)\circ(\Lambda\otimes\dd)
  \circ(\Id_G\otimes\rcoev_G\otimes\Id_\Po)=\Id_{G\otimes G}.
  \end{equation} \end{definition}
  This equality  is represented  pictorially as 
 $$
\epsh{fig19}{12ex}\put(-22,-2){$\wt d$}\put(-72,0){$\Lambda$}=\epsh{fig20}{12ex}
$$
where all blue strands are colored by $G$.
We utilize the word \emph{chromatic} here because the morphism $\dd$ is used to change a red
circle into a blue graph, see Theorem \ref{T:ExtenFtoRedBlue} below.  In the setting of Hopf algebras this transformation 
corresponds to  evaluation of the integral on the red circle (see Section \ref{SS:KuperbergInv}). 

A \emph{generator} of an  ideal $\I$ is an object $G\in\I$ such that for any
$P\in\I$,
\begin{equation}
  \label{eq:factorG}
  \Id_P=\sum_{j\in J} f_jg_j 
\end{equation}
for some   morphisms $f_j:G\to P,\,  g_j:P\to
G$ numerated  by a finite set $J$. 
As we will see below, in the setting of Hopf algebras such a generator  is  a projective generator.  

Let $\dd$ be a chromatic morphism for a generator $G$ of an ideal $\ideal$.   If $P\in\I$ 
and $\Id_P=\sum_jf_jg_j$ as in Formula \eqref{eq:factorG}, then we set 
\begin{equation}\label{eq:dp}
\dd_P=\sum_{j\in J}(\Id_G\otimes f_j)\dd(\Id_G\otimes g_j):G\otimes P\to G\otimes P.
\end{equation}
  The following theorem is proved  in
Section \ref{SS:ProofOfMainT}.
\begin{theorem}\label{T:ExtenFtoRedBlue}
  Let $\cat$ be a pivotal $\FK$-category equipped with a non-degenerate m-trace on an ideal $\ideal$ and a chromatic morphism $\dd$ on a generator $G\in\I$.  
    Then there exists a unique extension of   $F':\Hp\to \FK$  to admissible
  bichrome handlebody graphs
  which is preserved under the following transformation making  a red circle   blue in the presence of a  nearby  blue edge colored with an object $P\in\I$ as shown in the figure:
  \begin{equation}
   \label{E:ExtOfF'toRed}\epsh{fig3}{18ex}\put(0,15){\ms{P}}\longrightarrow\epsh{fig7}{18ex}\put(-18,-5){{$\dd_P$}}
    \put(-30,18){\ms G} \put(-13,25){\ms P} \;.
      \end{equation}
        Moreover,  if $(H,\Gamma)$ is a bichrome handlebody graph then $F'(H,\Gamma)$ only depends on the orientation preserving diffeomorphism class of $(H,\Gamma)$.
  \end{theorem}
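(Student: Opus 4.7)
The plan is to define $F'$ on admissible bichrome handlebody graphs by iteratively applying the transformation \eqref{E:ExtOfF'toRed} to turn every red circle into a blue $\ideal$-colored subgraph, and then to apply Theorem \ref{P:F'onProjHandlebodies} to the resulting $\ideal$-colored graph. Uniqueness of the extension is immediate from this description. Given an admissible bichrome $(H,\Gamma)$ with red circles $C_1, \dots, C_n$, the admissibility hypothesis supplies for each $C_i$ a blue edge $e_i$ in the same component of $H$, colored by some $P_i \in \ideal$. One isotopes $C_i$ on $\partial H$ into the local configuration of \eqref{E:ExtOfF'toRed} next to $e_i$, applies the transformation to replace $C_i$ by a $\dd_{P_i}$-coupon with a small $G$-colored loop, and iterates over $i$. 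The tentative value of $F'(H,\Gamma)$ is then the $F'$-value of the resulting $\ideal$-colored graph, provided by Theorem \ref{P:F'onProjHandlebodies}.

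The existence claim and the diffeomorphism invariance together reduce to showing that this recipe is independent of the various choices made: (i) the factorization $\Id_{P_i} = \sum_j f_j g_j$ entering the definition of $\dd_{P_i}$ via \eqref{eq:dp}, (ii) the ordering of the absorption steps, and (iii) the choice of the absorbing blue edge $e_i$ for each red circle. For (i), expanding $\dd_{P_i}$ via \eqref{eq:dp} realizes the converted local picture as a sum of diagrams each involving a single $\dd$-coupon between two $G$-strands flanked by $f_j, g_j$ coupons on the $P_i$-strand; the cutting invariance in Theorem \ref{P:F'onProjHandlebodies}(4), combined with the non-degeneracy of the m-trace pairing used to define $\Omega_P$, allows one to trade any two factorizations of $\Id_{P_i}$ against each other without changing the resulting $F'$-value. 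Item (ii) is straightforward: absorptions at distinct red circles act on disjoint local neighborhoods and hence commute as formal operations on the diagram.

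The central difficulty is (iii). Given two candidate edges $e_i$ and $e_i'$ in the same component of $H$, one chooses a path on $\partial H$ from a neighborhood of $e_i$ to a neighborhood of $e_i'$ and isotopes $C_i$ along this path, reducing the comparison to a sequence of elementary moves in which $C_i$ slides across a single edge of $\Gamma$. The characteristic property of red curves in the bichrome setting is precisely that every edge of $\Gamma$ slides freely over a red curve, so such a sequence of moves is legitimate. The heart of the argument is a local lemma: when $C_i$ slides across a blue strand colored by some $Q \in \ideal$, the value of $F'$ after the subsequent absorption is unchanged. This is where the chromatic equation \eqref{E:HypothOfdd} enters essentially, interpreted graphically as asserting that a red circle can be transported across a $G$-strand in exchange for a $\dd$-coupon; the generator property \eqref{eq:factorG} then reduces the case of a general color $Q \in \ideal$ to that of $G$ by rewriting the $Q$-strand as a combination of $G$-pieces. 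Once independence from $e_i$ is established, diffeomorphism invariance follows by transporting all choices along an orientation-preserving diffeomorphism and invoking Theorem \ref{P:F'onProjHandlebodies}(1) on the resulting $\ideal$-colored graphs.
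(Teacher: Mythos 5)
Your overall architecture matches the paper's: convert each red circle to blue via the chromatic morphism and reduce to Theorem \ref{P:F'onProjHandlebodies}, then check independence of the choices. You also correctly identify the choice of absorbing edge/path as the central difficulty. But your resolution of that difficulty contains a genuine gap. You propose to isotope the red circle $C_i$ from a neighborhood of $e_i$ to one of $e_i'$, crossing blue strands along the way, and you justify the crossings by appealing to the slide property of red circles together with a ``local transport'' reading of the chromatic equation \eqref{E:HypothOfdd}. Neither justification works as stated. Pushing a red circle transversally across a blue strand is not a slide (a slide replaces an edge by its band-sum with a push-off of the red circle), and in any case the invariance of $F'$ under slides is established in the paper only \emph{after} this theorem (via Proposition \ref{prop:sliding} and the red digging/capping invariance proved in Theorem \ref{T:MainExKup}), so invoking it here is circular. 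Moreover, the chromatic equation does not say that a red circle can be transported across a $G$-strand; what it says (in the form of Lemma \ref{L:HypothOfdd_P}) is that the local picture produced by digging a hole, decorating it with $\Lambda_{P'\otimes G^*}$ and a $\dd_P$-coupon, composes to the identity --- i.e.\ it is the statement that $F'$ is invariant under \emph{blue digging and capping moves}.

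The missing ideas are precisely the ones the paper uses to compare two absorption paths $\gamma_0,\gamma_1$ ending on the same red circle $\beta_0$: (a) if $\beta_0$ has complexity one, then by Proposition \ref{prop:complexityonedigging} it is the result of a digging move, so either absorption yields the left-hand side of Figure \ref{F:BlueDigging} and a blue capping move sends both to the \emph{same} $\ideal$-colored graph (this also disposes of your worry (i), since the $\dd_P$-coupon is removed entirely); and (b) an arbitrary $\beta_0$ is reduced to the complexity-one case by first performing an auxiliary blue digging along a small arc meeting $\beta_0$ once. The whole argument is then organized as an induction on the number of red circles, with the case of two distinct circles handled by commuting disjoint absorptions and, when the paths intersect, by producing a third path disjoint from $\gamma_0$ (using that the complement of $\gamma_0$ in the relevant component of $\partial H\setminus\Gamma$ is connected). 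Without the complexity-one characterization and the blue digging/capping mechanism, your ``local lemma'' for crossing a $Q$-colored strand has no proof, and the existence half of the theorem does not go through.
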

Later we will see that $F'$ is also invariant under sliding an edge of $\Gamma$ over a red circle, see Proposition \ref{prop:sliding}.  

\subsection{The invariant $\Kuni$}\label{SS:InvKuni} 
Let $\cat$ be a pivotal $\FK$-category equipped with a non-degenerate m-trace
$\mt$ on an ideal~$\ideal$ and a chromatic morphism on a generator~$G$ of~$\ideal$.  
 Since $\mt$ is non-degenerate,  there exists a morphism $h:G \to G$ such that
$\mt_G(h)\neq 0$.  By renormalizing the m-trace we can assume $\mt_G(h)=1$.
Let $\IP$ be the ribbon graph in $\R^2$ formed by the braid closure of the
coupon filled with $h$.  Consider the bichrome handlebody graph $(B^3,\IP)$ where $\IP$ is colored in blue and viewed as a graph on   $S^2=\partial B^3$.   Clearly,  $F'(B^3,\IP)=F'(\IP)=\mt_G(h)=1$.

Let $M$ be a closed connected oriented 3-manifold.      A \emph{Heegaard diagram} of $M$ is a prescription for a Heegaard splitting $M=H_\alpha\cup_{\Sigma}H_\beta$ 
determined by upper and lower reducing sets of bounding circles $\{\alpha_i\}$ and $\{\beta_i\}$  in   $\Sigma=\partial H_\alpha=\partial H_\beta$, see Section \ref{S:InvBich3Man} for   details.
A Heegaard diagram as above  determines an admissible bichrome handlebody graph on $H_\alpha$ 
where the red subgraph is the set  $\cup_i \beta_i \subset \partial H_\alpha$ and the blue
subgraph is $\IP$ embedded in a small disc in $\partial H_\alpha$. We call this bichrome handlebody graph  a \emph{bichrome diagram} for $M$, see Figure \ref{F:L21} for an example.
\begin{figure}[htbp]
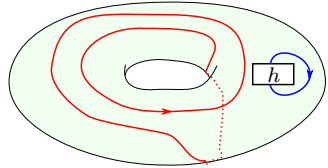

\begin{center}
$$
\epsh{fig33}{12ex}\put(-18,3){\ms{h}}
$$
\caption{\emph{Bichrome diagram for the lens space $L(2,1)$.}}
\label{F:L21}
\end{center}
\end{figure}

We state the main theorem of the paper   proved in Section \ref{SS:ProofOfMainT}.
\begin{theorem}
\label{T:MainExKup} 
   If $(H,\Gamma)$ is a bichrome diagram for a closed connected oriented 3-manifold $M$, 
   then
  $F'(H,\Gamma)\in \FK$  depends only of the diffeomorphism class of $M$.  
  \end{theorem}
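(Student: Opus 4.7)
The plan is to reduce invariance of $F'(H,\Gamma)$ under the choice of bichrome diagram for $M$ to a standard set of moves on Heegaard diagrams. By the Reidemeister--Singer theorem, any two Heegaard diagrams of a closed oriented 3-manifold $M$ are related by a finite sequence of: \emph{(i)} orientation-preserving diffeomorphisms of the Heegaard surface and isotopies of the curves; \emph{(ii)} handle slides among the $\alpha$-curves and among the $\beta$-curves; \emph{(iii)} stabilization, i.e.\ the addition of a dual canceling pair $(\alpha_0,\beta_0)$ on a new genus-$1$ summand of $\Sigma$. One must also confirm independence from the position of the small disc in $\partial H_\alpha$ carrying $\IP$.

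Moves of type \emph{(i)} are immediate from the topological invariance clause in Theorem~\ref{T:ExtenFtoRedBlue}: an ambient diffeomorphism of the surface extends to one of the handlebody and carries the bichrome diagram to a diffeomorphic one. Slides of a $\beta$-curve over another $\beta$-curve are slides of one red circle over another on $\partial H_\alpha$, and are handled by Proposition~\ref{prop:sliding}. Slides among $\alpha$-curves leave the bichrome diagram unchanged, since the $\alpha$-system is not part of $\Gamma$ and encodes only the abstract handlebody $H_\alpha$. Independence from the disc for $\IP$ follows by isotoping it along $\partial H_\alpha$, with each crossing over a red circle resolved by applying Proposition~\ref{prop:sliding} to each edge of $\IP$ in turn.

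The crux is invariance under stabilization. Let $(H',\Gamma')=(H\natural T,\,\beta\cup\beta_0\cup\IP)$ arise from $(H,\Gamma)$ by attaching a solid torus $T\cong S^1\times D^2$ to $\partial H$ along a disc and adding a longitude $\beta_0$ of $\partial T$ as a new red circle. By topological invariance I would isotope a $G$-colored strand of $\IP$ onto $\partial T$, parallel to $\beta_0$, keeping the $h$-coupon fixed. Rule~\eqref{E:ExtOfF'toRed} applied to this strand and $\beta_0$ converts $\beta_0$ into a $\dd_G$-coupon linking the $\IP$-strand to a fresh $G$-colored closed loop $\ell$ running where $\beta_0$ used to. Writing $\Gamma'_{\mathrm{blue}}$ for the resulting fully blue graph, Theorem~\ref{T:ExtenFtoRedBlue} gives $F'(H',\Gamma')=F'(H',\Gamma'_{\mathrm{blue}})$.

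I would then cut $H'$ along a properly embedded meridian disc $D\subset T$ whose boundary meets both $\ell$ and the displaced $\IP$-strand in a single point each (with opposite orientations with respect to the framing on $\partial T$) and is disjoint from the rest of the graph. The meridian disc cuts $T$ into a ball, so $\cut_D(H')\cong H$; condition~(4) of Theorem~\ref{P:F'onProjHandlebodies} (extended to bichrome graphs by Theorem~\ref{T:ExtenFtoRedBlue}) replaces the two intersection points by a pair of matched coupons colored by $\Omega_{G\otimes G^*}=\sum_i x_i\otimes_\kk y_i$. Thus $F'(H',\Gamma'_{\mathrm{blue}})=F'(H,\Gamma'')$, where $\Gamma''$ agrees with $\Gamma$ outside a small ball on $\partial H$ that contains the $\dd_G$-coupon, the two $\Omega_{G\otimes G^*}$-coupons, and the arcs left of $\ell$ and of the displaced strand of $\IP$.

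The main obstacle is to identify this local subgraph, as a morphism, with the left-hand side of axiom~\eqref{E:HypothOfdd}. The two matched cut coupons, closed up by the arc of $\ell$, realize, by the very definition of $\Lambda=\sum_i x_iy_i\in\End_\cat(G\otimes G^*)$, a single $\Lambda$-coupon on a $G\otimes G^*$ strand. Combined with the adjacent $\dd_G$-coupon and the caps/cups induced by the geometry on $\partial T$ (the $\IP$-strand supplies the outer $G$-arc, and the cut-open longitude supplies the inner $G\otimes G^*$-loop), the local composite is precisely $(\Id_G\otimes\lev_G\otimes\Id_G)\circ(\Lambda\otimes\dd)\circ(\Id_G\otimes\rcoev_G\otimes\Id_G)=\Id_{G\otimes G}$. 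The $\IP$-strand is therefore restored, the local configuration disappears, and $F'(H,\Gamma'')=F'(H,\Gamma)$. Combined with the previous steps, Reidemeister--Singer completes the proof.
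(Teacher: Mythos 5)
Your overall strategy coincides with the paper's: reduce to the Reidemeister--Singer moves, dispose of handle slides via Proposition \ref{prop:sliding}, and kill the stabilizing red circle with the chromatic morphism so that cutting along the compressing disc of the new handle exposes the defining identity of $\dd$. (The paper packages this as Theorem \ref{T:HandlebodyDiagsM} together with Lemma \ref{P:F'InvIcolBlueDigging}.) However, your crux computation misidentifies the local picture. The left-hand side of axiom \eqref{E:HypothOfdd} is a morphism in $\End_\cat(G\otimes G)$: it has two \emph{distinct} external strands, one passing only through $\Lambda$ and one passing only through $\dd$, joined by the internal cup--cap loop. In your configuration the $\IP$-strand is isotoped parallel to $\beta_0$, hence runs over the new handle and crosses the meridian disc $D$; after cutting, that single strand passes both through the $\Omega_{G\otimes G^*}$-coupons (i.e.\ through $\Lambda$) \emph{and} through $\dd_G$, so your local morphism lies in $\End_\cat(G)$ and is not ``precisely'' the left-hand side of \eqref{E:HypothOfdd}. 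It is that morphism with its two external legs joined, and whether the joining is a serial composition (yielding $\Id_G$) or a partial trace (yielding $\qdim(G)\,\Id_G$, which vanishes in the non-semisimple cases this machinery is built for) is a planar-geometry check you have not performed. The correct configuration is the one in Figure \ref{F:BlueDigging}: the blue edge carrying $\dd$ stays off the handle and does not meet $D$; only the new $G$-loop (and whatever strand the tunnel was dug under) crosses $D$.

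A second gap: your treatment of $\beta$-handle slides is not closed. Proposition \ref{prop:sliding} only asserts that a slide is a composition of red digging and capping moves, so you still need $F'$-invariance under those moves; moreover the digging moves it produces create a handle whose compressing disc is crossed by the red circle being slid over, so the relevant local picture is not the all-$G$ stabilization picture you analyze, and the strands involved may carry arbitrary colors $P,P'\in\I$ once earlier red circles have been converted to blue. This general case is exactly what Lemma \ref{L:HypothOfdd_P} (the extension of \eqref{E:HypothOfdd} from $G\otimes G$ to $P'\otimes P$) and Lemma \ref{P:F'InvIcolBlueDigging} are for; you should invoke them rather than the bare axiom for $G$.
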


We denote the invariant of Theorem \ref{T:MainExKup} by $\Kuni_\cat(M)$.  
Let us now discuss some examples.

\subsection{Hopf algebras and Kuperberg's Invariant}\label{SS:KupIntro} 
We use standard terminology from the theory of  Hopf algebras, for details see Section
\ref{SS:KuperbergInv}. Let $\Ho$ be a finite dimensional unibalanced
unimodular pivotal Hopf algebra.  Let $\Ho$-mod be the category of
finite dimensional modules over $\Ho$.  Let $\Proj$ be the ideal of
projective objects in $\Ho$-mod.  The Hopf algebra $\Ho$ itself with
its left regular representation is a generator of $\Proj$.
 
In Section \ref{SS:KuperbergInv} we prove the following theorem.
\begin{theorem}\label{T:ExHopf}
 There exist  a non-degenerate m-trace on $\Proj$ in $\Ho$-mod and a chromatic morphism $\dd$ on the generator $\Ho$ of $\Proj$.
 \end{theorem}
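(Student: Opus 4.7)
The plan is to derive both the m-trace and the chromatic morphism from the (co)integral structure of $\Ho$, via the standard dictionary between the categorical and Hopf-algebraic sides.

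For the m-trace: since $\Ho$ is finite-dimensional and unimodular, it admits a non-zero two-sided integral $\mu\in\Ho^*$, and together with the pivotal element $g\in\Ho$ (normalized by the unibalanced hypothesis) the form $\mu(g\cdot-):\Ho\to\kk$ is a symmetric non-degenerate Frobenius trace.  Following \cite{BBG17a,GKP2}, I define $\mt_\Ho:\End_\Ho(\Ho)\to\kk$ via this form, using the canonical isomorphism $\End_\Ho(\Ho)\cong\Ho$ given by evaluation at $1$, and extend $\mt$ uniquely to $\Proj$ using that $\Ho$ is a projective generator and that an m-trace is determined by its value on a generator together with cyclicity and partial-trace compatibility.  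Non-degeneracy of the pairing $\Hom(\unit,P)\times\Hom(P,\unit)\to\kk$ for $P=\Ho$ is then immediate from non-degeneracy of $\mu(g\cdot-)$, and propagates to general $P\in\Proj$ because each such $P$ is a direct summand of $\Ho^{\oplus n}$ for some $n$.

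For the chromatic morphism I take $G=\Ho$.  The first step is to compute $\Lambda\in\End_\Ho(\Ho\otimes\Ho^*)$ coming from $\Omega_{\Ho\otimes\Ho^*}$: the non-degeneracy of $\mt$ identifies $\Hom_\Ho(\unit,\Ho\otimes\Ho^*)$ with a space of invariants that, under the Hopf dictionary, corresponds to central elements of $\Ho$, and in this identification $\Lambda$ translates into (right) multiplication by the two-sided cointegral $\Lambda_\Ho\in\Ho$ (appropriately $g$-twisted).  I then define $\dd:\Ho\otimes\Ho\to\Ho\otimes\Ho$ from the coproduct and antipode of $\Ho$, via an expression of the form $a\otimes b\mapsto\sum a_{(1)}\otimes S(a_{(2)})b$ or a close variant incorporating the pivotal twist.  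Verification of \eqref{E:HypothOfdd} amounts to translating both sides via the Hopf dictionary and reducing to the defining property $a\Lambda_\Ho=\ve(a)\Lambda_\Ho$ of the cointegral, combined with unimodularity.

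The main obstacle is pinning down the precise formula for $\dd$ and the exact form of $\Lambda$ after the categorical-to-Hopf translation: the unibalanced hypothesis is crucial here, as it is exactly the normalization that makes the pivotal $g$-twist compatible with the $S^2$-twist appearing in the coevaluations, so that the m-trace is honestly cyclic and the chromatic identity holds without spurious scalars.  Once the dictionary is set up correctly, the remaining verification becomes a diagrammatic computation that reduces to a handful of well-known identities for integrals in finite-dimensional unimodular Hopf algebras; the technical heart of the argument lies in setting up that dictionary carefully, keeping track of all antipode, pivotal, and integral factors.
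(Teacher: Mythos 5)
Your plan for the m-trace is essentially the paper's: the paper simply invokes Theorem~1 of \cite{BBG17b}, which realizes the m-trace on $\Proj$ as the symmetrised integral $x\mapsto\mu(gx)$ under $\End_{\Ho}(\Ho)\cong\Ho$, with unibalancedness making the left m-trace also a right one. Two corrections on that side, though. First, unimodularity of $\Ho$ means the \emph{cointegral} $\Lambda\in\Ho$ is two-sided ($S(\Lambda)=\Lambda$); it does not produce a two-sided integral $\mu\in\Ho^*$ --- the form used is a \emph{right} integral, which exists for any finite-dimensional Hopf algebra. Second, $\Hom_{\Ho}(\unit,\Ho\otimes\Ho^*)\cong\End_{\Ho}(\Ho)$ is all of $\Ho$ (acting by right multiplications), not just the central elements; the statement actually needed, proved as Lemma~\ref{L:LP=LP'}, is that $\Lambda_P=\sum_i x_iy_i$ is the \emph{module action of the cointegral} $\Lambda$ on $P$, and its proof uses the one-dimensionality of $\Hom_{\Ho}(\unit,\Ho)$ together with the normalization $\mt_\Ho(f_\Lambda\circ\epsilon)=1$.

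The genuine gap is in the chromatic morphism. Your candidate $\dd(a\otimes b)=\sum a_{(1)}\otimes S(a_{(2)})b$ cannot work: it does not involve the integral $\lambda\in\Ho^*$ at all, and it is not even a morphism of left $\Ho$-modules for the diagonal action on $\Ho\otimes\Ho$ (one computes $\dd\bigl(h\cdot(a\otimes b)\bigr)=ha_{(1)}\otimes S(a_{(2)})b\neq h\cdot\dd(a\otimes b)$). The entire purpose of $\dd$ is to implement evaluation of the integral on a red circle (Lemma~\ref{lem:integralasd}), so $\lambda$ must enter its definition; the paper takes $\dd=(\rev_\Ho\otimes\Id_{\Ho\otimes\Ho})(\Id_\Ho\otimes f_{\lambda,1_\Ho}\otimes\Id_\Ho)(\Id_\Ho\otimes\Delta)$, built from the module map $f_{\lambda,1_\Ho}:\Ho\to\Ho^*\otimes\Ho$, $1_\Ho\mapsto\lambda\otimes 1_\Ho$. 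Moreover, the verification of \eqref{E:HypothOfdd} does not reduce to the cointegral identity $a\Lambda=\epsilon(a)\Lambda$ plus unimodularity, as you suggest: the crucial step is the mixed identity $\lambda(S(\Lambda_{(2)}))\Lambda_{(1)}=1_\Ho$, which uses the defining property of the \emph{right integral} $\lambda$, the relation $S(\Lambda)=\Lambda$, and the normalization $\lambda(\Lambda)=1$, together with the identity $\lev_\Ho\circ(\Id_{\Ho^*}\otimes L_x)\circ f_{\lambda,1_\Ho}=\lambda(x)\,\epsilon$ from \cite{DGP}. Since you explicitly defer pinning down the formula for $\dd$ and the exact form of $\Lambda$, the proposal omits precisely the content of this half of the theorem.
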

Let us briefly discuss the hypothesis on $\Ho$.  The existence of the chromatic morphism results from the theory of (co)integrals in  Hopf algebras.   The requirement that $\Ho$ is pivotal implies that the category $\Ho$-mod is pivotal.
The unimodularity of $A$ ensures that the ideal $\Proj$ has a non-degenerate right m-trace.
That $A$ is unibalanced implies that this m-trace is also a left m-trace, see \cite{BBG17b}.

Theorems \ref{T:MainExKup} and \ref{T:ExHopf} yield an invariant of 3-manifolds $\Kuni_{\Ho\text{-mod}}$.  By  Lemmas \ref{L:LP=LP'} and \ref{L:Def-d-forHopf}, the chromatic morphism~$\dd$ is essentially determined by the integral $\lambda:\Ho\to \FK$ and cutting along a bounding circle is determined by the cointegral $\Lambda\in \Ho$, where $\lambda(\Lambda)=1$.   

In Section \ref{SS:ProofKuperbergInv} we will prove the following theorem.

\begin{theorem}\label{T:K=KInvlutive}  If $\Ho$ is involutive
(i.e. the square of the antipode in~$A$ is the identity map) 
then $$\Kuni_{\Ho\text{-mod}}(M)=\Kup_\Ho(M)$$ where  $\Kup_\Ho(M)$ is the Kuperberg invariant derived from   $\Ho$, see \cite{Ku90}.  
\end{theorem}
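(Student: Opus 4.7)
The plan is to transform the bichrome diagram for $M$ provided by Theorem~\ref{T:MainExKup} into an explicit tensor contraction and then match this contraction, term by term, with Kuperberg's state sum from \cite{Ku90}.

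Fix a Heegaard diagram $M=H_\alpha\cup_\Sigma H_\beta$ with meridian systems $\{\alpha_i\}$ and $\{\beta_j\}$, and let $(H_\alpha,\Gamma)$ be the associated bichrome diagram, so $\Kuni_{\Ho\text{-mod}}(M)=F'(H_\alpha,\Gamma)$. My strategy has three stages: first recolor every red $\beta_j$ into a blue subgraph, then cut $H_\alpha$ along the meridian discs bounded by the $\alpha_i$, and finally identify the resulting spherical-graph scalar with Kuperberg's combinatorial formula.

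For the recoloring I use sliding invariance (Proposition~\ref{prop:sliding}) to bring a blue $A$-colored edge next to each red circle $\beta_j$, and then apply~\eqref{E:ExtOfF'toRed} with $G=P=A$ to convert $\beta_j$ into a local blue gadget built from $\dd$. By Lemma~\ref{L:Def-d-forHopf}, this gadget is the tensor determined by the integral $\lambda\colon A\to\Bbbk$ and the comultiplication of $A$, which is precisely what Kuperberg places along each $\beta$-curve. For the cutting I apply Theorem~\ref{P:F'onProjHandlebodies}(4) along each meridian disc $D_i$ bounded by $\alpha_i$. Each cut inserts the pair $\Omega_A\in\Hom_\cat(\unit,A)\otimes_\Bbbk\Hom_\cat(A,\unit)$, which by Lemma~\ref{L:LP=LP'} is determined by the cointegral $\Lambda\in A$ normalised so that $\lambda(\Lambda)=1$; this is exactly Kuperberg's insertion along each $\alpha$-curve. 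After all cuts $H_\alpha$ becomes a disjoint union of balls, and the spherical case of Theorem~\ref{P:F'onProjHandlebodies} reduces $F'$ to the Penrose scalar $F'(\Gamma')$ for an $\I$-colored spherical graph $\Gamma'$.

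I then expand $F(\Gamma')$ as an iterated tensor contraction over $A$-mod. Every intersection $\alpha_i\cap\beta_j$ on $\Sigma$ yields a pairing between a leg of one $\Omega_A$ and a leg of one $\dd$, contracted through an evaluation or coevaluation of $A$ viewed as an $A$-module; with the base graph $\IP$ contributing the overall normalisation $\mt_A(h)=1$. Written out, this is Kuperberg's state sum up to the antipode-squared decorations produced by the pivotal structure on dualised strands. The hypothesis $S^2=\Id$ kills every such decoration, yielding $F'(H_\alpha,\Gamma)=\Kup_\Ho(M)$.

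The main obstacle is precisely this last bookkeeping at intersections: one must check that the local orientations, framings, and the pivotal element's appearances on dualised strands conspire, under $S^2=\Id$, to match Kuperberg's sign and antipode conventions exactly --- in the non-involutive case these decorations would not cancel and the two invariants would a priori differ. The remaining independence statements (basis for $\Omega_A$, placement of $\IP$, order of cuts and recolorings) are guaranteed by Theorems~\ref{P:F'onProjHandlebodies} and~\ref{T:MainExKup}, and the topological invariance of Kuperberg's construction is classical.
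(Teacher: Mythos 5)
Your outline reproduces the first two stages of the paper's argument faithfully: recoloring the $\beta$-circles via the chromatic morphism (which Lemma~\ref{L:Def-d-forHopf} ties to the integral $\lambda$), cutting along the $\alpha$-discs via Theorem~\ref{P:F'onProjHandlebodies}(4) (which Lemma~\ref{L:LP=LP'} ties to left multiplication by the cointegral $\Lambda$), and reducing to a spherical graph $\Gamma'$ with normalisation $\mt_A(h)=1$. But the step you yourself flag as ``the main obstacle'' --- checking that the orientations, framings, and pivotal decorations at the intersection points $\alpha_i\cap\beta_j$ conspire under $S^2=\Id$ to reproduce Kuperberg's conventions --- is not an afterthought to be verified; it is the actual content of the theorem, and you have not supplied an argument for it. As stated, your proof establishes only that $F'(H_\alpha,\Gamma)$ is \emph{some} tensor contraction built from $\lambda$, $\Delta$, and $\Lambda$ indexed by the Heegaard diagram, not that it equals Kuperberg's specific state sum.

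The paper closes this gap with a device absent from your proposal: since $S^2=\Id$ one may take the pivotal structure on $\Ho$-mod to be trivial, and then the forgetful functor to $\Vec$ is a pivotal functor commuting with the Penrose calculus (Lemma~\ref{L:ForgetFunctorRT}). Pushing the computation into $\Vec$, where the ribbon structure is trivial, makes $F_{\Vec}(\Forgetful(T_\Ho))$ depend only on the abstract graph underlying $\Gamma'$ --- this is what eliminates all framing and crossing bookkeeping at once, rather than a term-by-term cancellation. The identification with $\Kup_\Ho(M)$ is then made via a bead formalism: each intersection $c_k$ of $\alpha_j$ with the $\beta$-curves receives the bead $S^{p_k}(\Lambda_{(k)})$ with $p_k\in\{0,1\}$ recording the local intersection sign, the beads along each $\beta_i$ are collected into a word $a_i$, and Lemma~\ref{lem:integralasd} evaluates each $\beta_i$ to $\lambda(a_i)$, giving $\lambda(a_1)\cdots\lambda(a_g)$, which is Kuperberg's formula. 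Without the passage to $\Vec$ (or an equivalent mechanism), your claim that ``the hypothesis $S^2=\Id$ kills every such decoration'' remains an assertion, not a proof; you should either adopt the forgetful-functor argument or give an explicit local verification at each intersection type.
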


\subsection{Turaev-Viro invariant}\label{SS:TVinvIntro} Let $\cat$ be
a finite semi-simple spherical $\FK$-category and let $ 
\{S_i\}_{i\in I}$ be a set of representatives of the isomorphism
classes of simple objects of $\cat$  (see
\cite{BW2}).  The  scalar
\begin{equation}
  \label{eq:dimC}
  \mathcal D =\sum_{i\in I} \qdim(S_i)^2 \in \FK 
\end{equation}   is called the dimension of $\cat$. We will explain  that $\cat$ satisfies the
assumptions  of Theorem \ref{T:MainExKup}, and, if  $\mathcal D \neq 0$, then the  resulting 3-manifold
invariant $\Kuni_\cat$ is equal to  the Turaev-Viro invariant $\TV$ \cite{TV,BW1}
associated to $\cat$.

Observe that here $G=\oplus_{i\in I} S_i$  is
a generator of $\ideal=\cat$ and 
the quantum trace $\mt=\operatorname{qTr}_\cat$ is
a non-degenerate m-trace on $ \cat$.  It follows that
$$\left\{x_i=\frac{1}{\qdim(S_i)}\lcoev_{S_i}\right\}_{i\in I} \text{ and } \left\{y_i=\rev_{S_i}\right\}_{i\in I}$$
 are dual  bases of $\Hom_\cat(\unit,G \otimes G^*)$ and $\Hom_\cat(G\otimes G^*,\unit)$, respectively. Using the expansion 
$\Omega_{G\otimes G^*}=\sum_{i\in I} x_i\otimes_\kk y_i$, it is straightforward to check that
$$\dd=\sum_{i\in I} \qdim(S_i)\Id_{S_i}\otimes \Id_G$$
is a chromatic morphism for
$G$.  
In Section \ref{S:ProofTVinv} we prove the following theorem.

\begin{theorem}\label{T:Kuni=TV}
If $\cat$ is a finite semisimple   spherical $\FK$-cate\-go\-ry of non-zero dimension~$\mathcal D $
with   chromatic morphism~$\dd$ and generator~$G$ then the invariant
$\Kuni_\cat$ is proportional to the Turaev-Viro invariant of  3-manifolds associated
to $\cat$:
$$\TV= {\mathcal D }^{-1} \, \Kuni_\cat.$$
\end{theorem}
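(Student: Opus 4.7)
The plan is to reduce both sides to state sums on a common triangulation. Pick a triangulation $\mathcal T$ of $M$ with $V$ vertices and $E$ edges, and let $H_\alpha$ be a closed regular neighborhood of its $1$-skeleton, so that $H_\beta=\overline{M\setminus H_\alpha}$ is a regular neighborhood of the dual $1$-skeleton. The meridian disks of $H_\beta$ are the intersections of the triangles of $\mathcal T$ with $H_\beta$; their boundaries form a complete system of $\beta$-curves on $\partial H_\alpha$ in bijection with the triangles of $\mathcal T$. Let $\Gamma_{\mathcal T}$ be the bichrome Heegaard diagram on $H_\alpha$ whose red subgraph is this system of circles and whose blue subgraph is the loop $\IP$ placed in a small disk near a fixed vertex. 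By Theorem~\ref{T:MainExKup}, $\Kuni_\cat(M)=F'(H_\alpha,\Gamma_{\mathcal T})$, and $\TV(M)$ can be computed from the same $\mathcal T$; it therefore suffices to prove that $F'(H_\alpha,\Gamma_{\mathcal T})=\mathcal D\cdot\TV(M)$.

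\textbf{Chromatic expansion.} Since $\dd=\sum_i\qdim(S_i)\,\Id_{S_i}\otimes\Id_G$ acts on $G\otimes P=\bigoplus_i S_i\otimes P$ by the scalar $\qdim(S_i)$ on the $i$-th summand, the chromatic move of Theorem~\ref{T:ExtenFtoRedBlue} rewrites a red circle traversed by a blue $P$-strand as $\sum_i\qdim(S_i)$ times an $S_i$-colored blue circle along the red curve, linking the $P$-strand. To apply this move to every $\beta$-circle one must first arrange a blue strand through each; this is achieved by cutting $H_\alpha$ along the $E$ meridian disks dual to the edges of $\mathcal T$. Each cut produces a matched pair of $\Omega$-coupons whose incident blue edges pierce the $\beta$-circles meeting that disk. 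After cutting and then expanding each $\beta$-circle, $F'(H_\alpha,\Gamma_{\mathcal T})$ becomes a sum indexed by a coloring $d$ of the triangles of $\mathcal T$ by elements of $I$, weighted by $\prod_t\qdim(S_{d(t)})$, times the $F'$-value of an all-blue planar graph on the disjoint union of $V$ vertex-balls.

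\textbf{State sum identification.} In the semisimple case, the $\Omega$-tensor at the cut along an edge $e$ expands as a sum over a simple object $S_{c(e)}$ together with dual bases of the fusion space $\Hom(\unit,\bigotimes_{t\ni e}S_{d(t)})$. Schur's lemma collapses the double sum over $(c,d)$: only colorings in which $d$ is compatible with $c$ through the fusion intertwiners survive. Inside each vertex-ball, the remaining planar blue graph lies on the link $2$-sphere of the vertex, which is triangulated by the corners of the tetrahedra incident to that vertex; its $F'$-value is a product of spherical $6j$-symbols of $\cat$, one per such corner, exactly matching the local Turaev--Viro weights.

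\textbf{Normalization and main obstacle.} Assembling the local contributions reconstructs the Turaev--Viro state sum up to an overall scalar. Tracking the $\qdim$-factors contributed by the $\dd$-expansion on triangles, by the $\Omega$-expansion on edges, and by the dual-basis normalizations, together with the $\mathcal D$-factors coming from the closed loops of the graph and from the value $\mt_G(h)=1$ of $\IP$, one obtains the identity $F'(H_\alpha,\Gamma_{\mathcal T})=\mathcal D\cdot\TV(M)$. The main obstacle lies in the bookkeeping of the chromatic expansion: the move (\ref{E:ExtOfF'toRed}) requires a nearby blue edge, so one must order the cutting and expansion operations so that each $\beta$-circle is adjacent to a blue $\Omega$-leg before the rule is invoked, and one must verify that the orientations induced on the link spheres combine, via the sphericality of $\cat$, into the standard $6j$-symbol — possibly after subdividing $\mathcal T$ so that each edge meets each triangle at most once.
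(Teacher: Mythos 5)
Your overall strategy --- triangulate, pass to the associated Heegaard splitting, expand the red circles with the chromatic morphism, cut, and recognize a state sum --- is the paper's, but you have dualized the key geometric choice, and this is where the argument breaks. The paper takes $H_\beta$ (the handlebody whose meridians become the red circles) to be the neighborhood of the $1$-skeleton of $\mathcal T$, so the red circles are meridians of the \emph{edges}, the cuts are along discs lying in the \emph{triangles}, and the pieces after cutting are balls around the barycenters of the \emph{tetrahedra}. Each such ball carries a tetrahedral spherical graph whose edges are colored by $G=\oplus_{i\in I}S_i$ and whose four $3$-valent coupons are colored by dual bases of $\Hom_\cat(\unit,S_i\otimes S_j\otimes S_k)$; its evaluation is literally a $6j$-symbol, and the standard Turaev--Viro state sum on $\mathcal T$ appears term by term. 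In your convention (red circles indexed by triangles, cuts along edge-meridian discs, pieces equal to vertex balls) the graph on the link sphere of a vertex is a single connected spherical graph whose coupons carry the spaces $\Hom_\cat(\unit,\bigotimes_{t\ni e}S_{d(t)})$ attached to the edges $e$ at that vertex; its evaluation is a global contraction, not ``a product of spherical $6j$-symbols, one per corner'' as you assert. What you would actually obtain is a state sum adapted to the dual cell decomposition, and identifying it with $\TV(M)$ as defined on triangulations is a nontrivial extra step that your write-up does not supply.

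Two further gaps. First, a bichrome diagram requires the red circles to form a \emph{minimal} reducing set, which necessarily omits a spanning tree (of the dual $1$-skeleton in your convention, of the $1$-skeleton in the paper's); so not every triangle (resp.\ edge) carries a red meridian, and your expansion does not place a color on every face of the state sum. The paper repairs this by inserting one red unknot per tree edge at the cost of a factor $\mathcal D^{-1}$ each and then sliding them into meridian position via Proposition \ref{prop:sliding}; this step is also precisely what produces the factor $\mathcal D^{-(v-1)}$ that, against the $\mathcal D^{v}$ coming from the cut state sum, yields the normalization $\TV=\mathcal D^{-1}\Kuni_\cat$, which you assert without computation. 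Second, your order of operations is not executable as stated: the cutting move of Theorem \ref{P:F'onProjHandlebodies} applies only to $\I$-colored (blue) edges, so you cannot cut along the edge-meridian discs first in order to create blue strands through the red circles --- at that stage those discs meet only red circles, which carry no color. One must first convert the red circles to blue (starting from $\IP$ and propagating), and only then cut.
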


\subsection{Open Problems}  Besides the categories  discussed here,  our approach certainly applies in other settings. Here we list  (from least   to most general)   three   further categories where our constructions should work:
\begin{enumerate}
\item the categories of finite dimensional modules over nice (quantum) Lie super algebras, see \cite{SchZ2001, SchZ2005},
\item the categories of finite dimensional modules over   nice quasi-Hopf algebras, see \cite{BC2003, BC2012,HN, PVO2000},
\item    general unimodular finite tensor categories, see \cite{Sh2017}.
\end{enumerate}
Here the adjective \lq\lq nice" means that the category satisfies the hypothesis of Theorem
\ref{T:ExtenFtoRedBlue}, in particular, admits an m-trace and a
chromatic morphism. The theory of \cite{GKP3} should imply the
existence of an m-trace in the above contexts though  we may need to choose
an appropriate pivotal structure to make the m-trace two sided, and it
may be useful to work  with the ideal of projective modules. It
seems likely that the references  listed  above  can help to construct 
chromatic morphisms in  the  categories  in question. It  is plausible that   the results of \cite{GKP2} may help to generalize our approach to
non-unimodular categories. 

In a different
direction, recall that 
Kuperberg \cite{Ku97} used framings of 3-manifolds  to generalize the invariant in \cite{Ku90} to arbitrary  finite dimensional Hopf algebras. As explained above,   a finite dimensional unibalanced unimodular pivotal Hopf algebra $\Ho$ gives rise to a framing-independent 3-manifold    invariant $\Kuni_{\Ho\text{-mod}}$ which   is computed in a way similar  to the invariants in \cite{Ku97} using an integral and a cointegral (see Section \ref{SS:KuperbergInv}). With Theorem \ref{T:K=KInvlutive} in mind, we   ask if $\Kuni_{\Ho\text{-mod}}(M)=\Kup_\Ho(M,f)$  for some framing~$f$ of a 3-manifold~$M$?  Is the Kuperberg invariant associated to a  {\em unibalanced unimodular pivotal Hopf algebra} framing-independent? (This is known  not be true for all  finite dimensional  Hopf algebras.)

\section{The algebraic setup}\label{S:AlgSetup}
\subsection{Pivotal and ribbon categories}\label{SS:LinearCat}
In this paper, we consider
strict tensor categories with tensor product $\otimes$ and unit object
$\unit$.  Let $\cat$ be such a category.  The notation $V\in \cat$
means that $V$ is an object of $\cat$.
 
The category $\cat$ is a \emph{pivotal category} if it has duality morphisms
\begin{align*} \coev_{V} &: 
\unit \rightarrow V\otimes V^{*} , & \ev_{V} & :  V^*\otimes V\rightarrow
\unit ,\\ 
 \tcoev_V & : \unit \rightarrow V^{*}\otimes V, &  \tev_V & :  V\otimes V^*\rightarrow \unit
\end{align*}
which satisfy compatibility
conditions (see for example \cite{BW2, GKP2}).

\subsection{$\FK$-categories}
Let $\FK$ be a field.  
A \emph{$\FK$-category} is a category $\cat$ such that its hom-sets are left $\FK$-modules, the composition of morphisms is $\FK$-bilinear, and the canonical $\FK$-algebra map $\FK \to \End_\cat(\unit), k \mapsto k \, \Id_\unit$ is an isomorphism.
A \emph{tensor $\FK$-category} is a tensor category $\cat$ such that $\cat$ is a $\FK$-category and the tensor product of morphisms is $\FK$-bilinear.

\subsection{M-traces on ideals in pivotal categories}\label{SS:trace}
Let $\cat$ be a pivotal $\FK$-category.  Here we recall the definition of an m-trace on an ideal in $\cat$, for more details see \cite{GKP2,GPV}.  
By an \emph{ideal} of $\cat$ we mean a full subcategory, $\ideal$, of~$\cat$ which is 
\begin{enumerate}
\item \textbf{Closed under $\otimes$:}  If $V\in \ideal$ and $W\in \cat$, then $V\otimes W$ and $W \otimes V $ are  objects of $\ideal$.
\item \textbf{Closed under retractions:} If $V\in \ideal$, $W\in \cat$  and there are  morphisms  $f:W\to V$,  $g:V\to W$ such that $g  f=\Id_W$, then $W\in \ideal$.
\end{enumerate}

An \emph{m-trace} on an ideal $\ideal$ is a family of linear functions
$$\{\qt_V:\End_\cat(V)\rightarrow \FK \}_{V\in \ideal}$$
such that following two conditions hold:
\begin{enumerate}
\item \textbf{Cyclicity:} 
 If $U,V\in \ideal$ then for any morphisms $f:V\rightarrow U $ and $g:U\rightarrow V$  in $\cat$ we have 
$
\qt_V(g f)=\qt_U(f  g)$.
\item \textbf{Partial trace properties:} \label{I:PartialTraceProp} If $U\in \ideal$ and $W\in \cat$ then for any $f\in \End_\cat(U\otimes W)$ and $g\in \End_\cat(W\otimes U)$ we have
\begin{equation*}
\qt_{U\otimes W}\bp{f}
=\qt_U\big((\Id_U \otimes \tev_W)(f\otimes \Id_{W^*})(\Id_U \otimes \coev_W)\big),
\end{equation*}
\begin{equation*}
\qt_{W\otimes U}\bp{g}
=\qt_U\big(( \ev_W\otimes \Id_U )( \Id_{W^*} \otimes g)( \tcoev_W\otimes \Id_U)\big).
\end{equation*}
\end{enumerate}

An $m$-trace on $\ideal$  is \emph{non-degenerate} if  for any  
$P\in\I$, the following pairing is non-degenerate:
$$
\Hom_\cat(\unit, P) \times \Hom_\cat(P,\unit) \to \kk, \,\,  (x,y)\mapsto \mt_P(xy).
$$
Using the pivotal structure and the partial trace property one can deduce from the non-degeneracy condition  that for all 
$P\in\I$ and $V\in \cat$, the pairing $$
\Hom_\cat(V, P) \times \Hom_\cat(P,V) \to \kk, \,\,  (x,y)\mapsto \mt_P(xy)
$$  is non-degenerate.

\subsection{Projective objects}\label{SS:ProjOb}
   An object $P$ of $\cat$ is \emph{projective} if  for any epimorphism $p\colon X \to Y$ and any morphism $f\colon P \to Y$ in $\cat$, there exists a morphism $g \colon P \to X$ in~$\cat$ such that $f=pg$.  An object $Q$ of $\cat$ is \emph{injective} if for any monomorphism $i\colon X \to Y$ and any morphism $f\colon X \to Q$ in $\cat$, there exists a morphism $g \colon Y \to Q$ in $\cat$ such that $f=gi$.  Denote by $\Proj$ the full subcategory of projective objects.   In a pivotal category projective and injective objects coincide (see \cite{GPV}).  Also, $\Proj$ is an ideal.

\subsection{Invariants of colored ribbon graphs}
Let $\cat$ be a pivotal $\FK$-category.  A morphism
$f:V_1\otimes{\cdots}\otimes V_n \rightarrow W_1\otimes{\cdots}\otimes
W_m$ in $\cat$ can be represented by a box and arrows:
$$ 
\epsh{fig32-francois}{13ex}\put(-37,-16){{\footnotesize $V_1$}}\put(0,-16){{\footnotesize $V_n$}}\put(-19,-16){{\footnotesize $\cdots$}}
\put(-39,16){{\footnotesize $W_1$}}\put(0,16){{\footnotesize $W_n$}}\put(-19,16){{\footnotesize $\cdots$}}\put(-17,0){$f$}
$$
which are called {\it coupons}. 
All coupons have a top and a bottom sides which in our  pictures will be the horizontal sides of the coupons.   By a ribbon graph in an
oriented manifold $\SS$, we mean an
oriented compact surface embedded in $\SS$ which is decomposed into
elementary pieces: bands, annuli, and coupons (see \cite{Tu}) and is a thickening of an oriented graph.  
In particular, the
vertices of the graph lying in $\Int \SS=\SS\setminus\partial \SS$ are
thickened to coupons.  A $\cat$-colored ribbon graph is a ribbon graph
whose (thickened) edges are colored by objects of $\cat$ and whose
coupons are colored by morphisms of~$\cat$.  
The intersection of a $\cat$-colored ribbon graph in $\Sigma$ with $\partial \Sigma$ is required to be empty or to consist only of vertices of valency~1.  
When $\SS$ is a surface the ribbon graph is just a tubular neighborhood of the graph.

A $\cat$-colored ribbon graph in $\R^2$ (resp.\ $S^{2}=\R^2\cup\{\infty\}$) 
is
called \emph{planar} (resp.\ \emph{spherical}).
Let $\Rib$ be the category of planar $\cat$-colored ribbon graphs and let $F:\Rib\to \cat$ be the pivotal functor\footnote{We call $F$ the pivotal functor because it can be  associated with each pivotal category.  Note, however, that the functor $F$ does not preserve the duality.} associated with $\cat$ via  the Penrose graphical calculus, see for example \cite{GPV}.    Let $\LL_{adm}$ be the class of all spherical $\cat$-colored ribbon graphs obtained as  the braid closure of a (1,1)-ribbon graph $T_V$ whose open edge is colored with  an object $V\in\ideal$.
 
 Given an m-trace $\mt$ on $\ideal$ we can renormalize $F$ to an invariant 
\begin{equation}\label{E:DefF'}
F':\LL_{adm}\to \kk \text{ given by } F'(L)=\mt_V(F(T_V))
\end{equation}
where $T_V$ is any (1,1)-ribbon graph as above.  The properties of the m-trace imply that $F'$ is a isotopy invariant of $L$, see \cite{GPV}.
\begin{remark}\label{R:fpp**}
If $\ideal$ is an ideal and $P\in \ideal$ then Lemma 2 of \cite{GPV} implies that $P^*\in \ideal$.  Moreover, the pivotal structure gives an isomorphism $f: P\to P^{**}$ for all $P$.  This isomorphism can be used to change the orientation of an edge of a graph as shown in the following diagram:
$$
\epsh{fig30-francois}{13ex}\put(-12,-1){{\footnotesize $P$}} \;\; \longleftrightarrow \;\;\epsh{fig31-Francois}{1in}\put(-15,12){{\footnotesize $f^{-1}$}}\put(-12,-12){{\footnotesize $f$}}\put(-6,-26){{\footnotesize  $P$}}\put(-6,-1){{\footnotesize $P^*$}}\put(-6,25){{\footnotesize $P$}}\;\;.
$$
\end{remark}

\section{Proof of Theorem \ref{P:F'onProjHandlebodies}}\label{S:InvOfProjHandlebodies}

\subsection{Preliminaries}  The following lemma contains standard facts from linear algebra; we leave the proof to the reader.  
\begin{lemma}\label{lem:basicalgebra}
Let $X_j$ and $Y_j$ be finite dimensional $\kk$-modules, for $j=1,2$.  Let
  $\brk{,}_{X_j,Y_j}:X_j\otimes_\kk Y_j \to \kk$ be a non-degenerate  bilinear pairing (i.e. a pairing  with trivial right and
  left kernels).  Given a basis $\{x^j_i\}$ of $X_j$  let 
   $\{y^j_i\}$ be the dual basis of $Y_j$ determined by $\brk{x^j_i,y^j_j}_{X_j,Y_j}=\delta_{i,j}$.   Then
  \begin{enumerate}
  \item the element $\Omega_j = \sum_i x_i^j \otimes_\kk y_i^j \in X_j\otimes Y_j$  is independent of the choice of the basis $\{x_i^j\}$,
  \item if $h:X_1\to X_2$ and $k:Y_2\to Y_1$ are $\kk$-linear maps such
  that $\brk{h(x),y}_{X_2,Y_2}=\brk{x,k(y)}_{X_1,Y_1}$ for all $x\in X_1$ and $y\in Y_2$ then 
  \begin{equation}\label{E:Omega12}
  (h\otimes \Id_{Y_1})( \Omega_1)=(\Id_{X_2}\otimes k)(\Omega_2).
  \end{equation}
  \end{enumerate}
\end{lemma}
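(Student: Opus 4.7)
The plan is to prove both parts by routine linear algebra, invoking no structure beyond the non-degeneracy of the pairings.

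For part (1), my first approach is conceptual: fixing $j$, the non-degeneracy of $\brk{,}_{X_j,Y_j}$ gives an isomorphism $X_j \xrightarrow{\sim} Y_j^*$ via $x \mapsto \brk{x,-}_{X_j,Y_j}$. Combined with the canonical isomorphism $Y_j^* \otimes_\kk Y_j \cong \End_\kk(Y_j)$, this identifies the element $\Omega_j$ with the endomorphism of $Y_j$ sending $y^j_i \mapsto y^j_i$, namely $\Id_{Y_j}$, which is manifestly independent of the basis. As an explicit backup, if $\{x'_i\}$ is a second basis with change-of-basis matrix $A$ (so $x'_i = \sum_k A_{ki}\,x^j_k$), then the constraint $\brk{x'_i,y'_\ell}_{X_j,Y_j}=\delta_{i\ell}$ forces $y'_\ell = \sum_m (A^{-1})_{\ell m}\,y^j_m$, and the sum $\sum_i x'_i \otimes y'_i$ telescopes via $\sum_i A_{ki}(A^{-1})_{im} = \delta_{km}$ back to $\sum_k x^j_k \otimes y^j_k$.

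For part (2), I would expand both sides of \eqref{E:Omega12} and apply the reproducing formulas for dual bases: any $x \in X_2$ satisfies $x = \sum_j \brk{x, y^2_j}_{X_2,Y_2}\, x^2_j$, and any $y \in Y_1$ satisfies $y = \sum_i \brk{x^1_i, y}_{X_1,Y_1}\, y^1_i$ (both because the right-hand side evaluates correctly against the dual basis). Applying the first to $h(x^1_i)$ gives
\begin{equation*}
(h\otimes\Id_{Y_1})(\Omega_1) = \sum_{i,j} \brk{h(x^1_i), y^2_j}_{X_2,Y_2}\; x^2_j \otimes y^1_i.
\end{equation*}
The adjointness hypothesis rewrites each coefficient as $\brk{x^1_i, k(y^2_j)}_{X_1,Y_1}$, after which the $Y_1$-reproducing formula collapses the sum over $i$ into $k(y^2_j)$, yielding $\sum_j x^2_j \otimes k(y^2_j) = (\Id_{X_2}\otimes k)(\Omega_2)$.

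There is no serious obstacle in this lemma; the only care needed is index bookkeeping and consistency about which factor of each pairing lives in $X$ versus $Y$. Once the reproducing formulas are in hand, both statements are one-line calculations.
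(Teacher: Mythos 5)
Your proof is correct, and both the conceptual identification of $\Omega_j$ with $\Id_{Y_j}$ and the reproducing-formula computation for part (2) are the standard arguments. The paper explicitly leaves this lemma to the reader as routine linear algebra, so there is no proof to compare against; your write-up supplies exactly the argument the authors intend.
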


\begin{proposition}\label{P:alg} Let $P\in \ideal$ and  $\Omega_P= \sum_i x_i\otimes_\kk y_i$ as in Formula  \eqref{E:DefOmegaP}. Let $\Lambda_P=\sum_ix_i y_i\in \End_\cat(P)$.  Then 
\begin{enumerate}  
\item \label{I:PalgIndep}  $\Omega_P$ is independent of the choice of the basis of $\Hom_\cat(\unit, P)$;  
\item \label{I:PalgNatural} If $P'\in \ideal$ and $\phi: P \to P'$ is a morphism then 
\begin{equation}\label{E:Omega12VT}
(\phi\otimes_\kk \Id_\unit)\Omega_P=\Omega_{P'}(\Id_\unit \otimes_\kk \phi)
\,\, \,\,  {\text {and}} \,\, \,\,
\phi \circ \Lambda_P=\Lambda_{P'}\circ \phi;
 \end{equation}
\item \label{I:Palg-fg} For any  morphisms  $f:\unit\to P $ and $g:P\to\unit$, we have
$$\mt_P(fg)=\sum_i \mt_P(fy_i) \, \mt_P(x_ig).$$
\end{enumerate}
\end{proposition}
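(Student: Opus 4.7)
The proof will be a mostly routine linear-algebra argument anchored on the non-degenerate pairing $\brk{x,y} := \mt_P(xy)$ on $\Hom_\cat(\unit,P)\times\Hom_\cat(P,\unit)$, together with cyclicity of the m-trace~$\mt$. All three parts will exploit the finite-dimensionality of $\Hom_\cat(\unit,P)$ over $\kk$ and the fact that $\{y_i\}$ is dual to $\{x_i\}$, i.e.\ $\mt_P(x_iy_j)=\delta_{ij}$.

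Part (1) will be immediate from Lemma~\ref{lem:basicalgebra}(1) applied to the non-degenerate pairing $\brk{\cdot,\cdot}$: the element $\Omega_P$ is by definition the canonical element associated to a non-degenerate pairing, hence independent of the chosen basis.

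For the first equality of Part (2), the plan is to apply Lemma~\ref{lem:basicalgebra}(2) with $X_1=\Hom_\cat(\unit,P)$, $Y_1=\Hom_\cat(P,\unit)$, $X_2=\Hom_\cat(\unit,P')$, $Y_2=\Hom_\cat(P',\unit)$, the pairings coming from $\mt$ on $P$ and $P'$, and the maps
\[
h\colon X_1\to X_2,\ x\mapsto \phi\circ x, \qquad k\colon Y_2\to Y_1,\ y\mapsto y\circ\phi.
\]
The required compatibility $\brk{h(x),y}_{X_2,Y_2}=\brk{x,k(y)}_{X_1,Y_1}$ unwinds to $\mt_{P'}(\phi\circ x\circ y)=\mt_P(x\circ y\circ\phi)$, which is exactly the cyclicity of the m-trace applied to $\phi\colon P\to P'$ and $x\circ y\colon P'\to P$ (note that both $P$ and $P'$ lie in $\ideal$, which is what cyclicity requires). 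Lemma~\ref{lem:basicalgebra}(2) then yields the first equation of \eqref{E:Omega12VT}. The second equation, $\phi\circ\Lambda_P=\Lambda_{P'}\circ\phi$, will follow by applying the $\kk$-linear composition map $\Hom_\cat(\unit,P')\otimes_\kk\Hom_\cat(P,\unit)\to\Hom_\cat(P,P')$, $a\otimes b\mapsto a\circ b$, to both sides of the first equation and summing.

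For Part (3), I will expand $f=\sum_i f_ix_i$ and $g=\sum_j g_jy_j$ with $f_i,g_j\in\kk$ using the chosen bases. The duality $\mt_P(x_iy_j)=\delta_{ij}$ immediately gives $\mt_P(fy_i)=f_i$ and $\mt_P(x_ig)=g_i$, while
\[
\mt_P(fg)=\sum_{i,j}f_ig_j\,\mt_P(x_iy_j)=\sum_i f_ig_i=\sum_i \mt_P(fy_i)\,\mt_P(x_ig).
\]
No serious obstacle is expected; the only point requiring care is in Part (2), where one must present the reduction to cyclicity by grouping $x\circ y$ as a morphism $P'\to P$ (rather than as an endomorphism of $P$) so that the hypotheses of the cyclicity axiom are met.
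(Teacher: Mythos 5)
Your proposal is correct and follows essentially the same route as the paper's proof: part (1) via Lemma~\ref{lem:basicalgebra}(1), part (2) via Lemma~\ref{lem:basicalgebra}(2) with the same maps $h$ and $k$ (your explicit verification of the compatibility hypothesis via cyclicity of $\mt$ is a detail the paper leaves implicit), and part (3) by the same basis expansion. No gaps.
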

\begin{proof}
  Set 
\begin{align*}
X_1&=\Hom_\cat(\unit, P), & Y_1&=\Hom_\cat( P,\unit),\\
X_2&=\Hom_\cat(\unit, P'),& Y_2&=\Hom_\cat( P',\unit).
\end{align*}  
For $j=1,2$, define the bilinear pairing $\langle ,\rangle_{X_j,Y_j}: X_j\otimes Y_j\to\kk$ by 
$$\langle x,y\rangle_{X_1,Y_1}=\mt_P(x\circ y), \text{ resp.\ } \langle x,y\rangle_{X_2,Y_2}=\mt_{P'}(x\circ y)$$
for $x\in X_j$ and $y\in Y_j$.  
Clearly,  Claim (1) of the proposition is a direct consequence of Claim (1) of Lemma \ref{lem:basicalgebra}. Next, we apply Claim (2)  of the lemma  to the linear maps 
$$h:\Hom_\cat(\unit,P)\to \Hom_\cat(\unit,P') \text{ given by } f\mapsto \phi\circ f,$$ 
$$k:\Hom_\cat(P',\unit)\to \Hom_\cat(P,\unit) \text{ given by } g\mapsto  g\circ \phi.$$ Then Formula  \eqref{E:Omega12} yields the first equality in~\eqref{E:Omega12VT}.   Writing this equality explicitly we get
$$
\sum_i \left(\phi\circ x_i\right)\otimes y_i=\sum_i x'_i\otimes \left(y'_i\circ \phi\right)
$$
where  $\Omega_{P'}= \sum_i x_i'\otimes_\kk y_i'$  and $\Lambda_{P'}=\sum_ix_i' y_i'\in \End_\cat(P')$ and the second equality in~\eqref{E:Omega12VT}  follows.  

To prove Claim  (3) of the proposition, expand $f=\sum a_i x_i$ and  $g=\sum b_iy_i$ where $a_i, b_i\in \kk$, $\{x_i\}$ is a basis of $X_1=\Hom_\cat(\unit, P)$ and   $\{y_i\}$ is the dual basis of $Y_1=\Hom_\cat(P, \unit)$ determined by the pairing 
$\langle ,\rangle_{X_1,Y_1}$.     
Then
$$ \mt_P(fg)=\sum_{i,j}a_ib_j\mt_P(x_i y_j)=\sum_{i}a_ib_i.$$
We also have   $\mt_P(f y_i)=\sum_j a_j \mt_P(x_j y_i)=a_i$ and similarly $\mt_P(x_ig)=b_i$ so Claim (3)  follows.
\end{proof}

\subsection{Proof of Theorem \ref{P:F'onProjHandlebodies}}

 By the genus of a multi-handlebody we mean the sum of the genera of   its components.  We proceed by induction  on the genus.
   The induction base concerns graphs on  disjoint unions of 3-balls.  Conditions (1) and (2) in Theorem~\ref{P:F'onProjHandlebodies}  define $F'$ uniquely for $\ideal$-colored graphs   on the boundary of a topological  3-ball.  Condition (3) extends 
  $F'$ uniquely to $\ideal$-colored graphs   on disjoint unions of 3-balls.  Note that cutting a 3-ball along a 2-disc as in Condition (4) produces two 3-balls.  To conclude the argument in the base case we
  check Condition  (4) for an arbitrary  
  $\ideal$-colored graph $(B^3,\G)\in \Hp$ on the boundary of a 3-ball $B^3$. 
  
   Let $D\subset B^3$ be an oriented   properly embedded disc   whose
  boundary  
  intersects $\G$ transversely (at the edges). 
Deforming if necessary~$D$, we can view  the circle  $\partial D$ as  
the   equator of the 2-sphere $ \partial B^3$ transversely  intersecting the edges of   $\G$ in several points.  We provide these points with  an order compatible with the cyclic order on the circle $\partial D$  whose orientation is induced by that of~$D$. Let $P\in \ideal$ be  the tensor product
  of the objects of~$\ideal$ associated with the edges of $\G$ traversing these points.     Let
  $\G_l$ and $\G_u$ be the parts of~$\G$ lying  in the lower and upper hemispheres,
  respectively (we use the orientation of $\partial D$ to distinguish them).  Cutting $(B^3,\G)$  along~$D$, we obtain an $\ideal$-colored graph on a
  disjoint union of two 3-balls $$(\cut_D(B^3),\cut_D(\G))=(B^3,\G'_l)\sqcup (B^3, \G'_u)$$ 
  where the new graphs are obtained by closing $\G_l$ and $\G_u$ with
  coupons using the tensor $\Omega_P=\sum_i x_i\otimes_\kk y_i$ as
  in Section~\ref{Main results and open problems}.
Consider the morphisms
  $F(\G_l):\unit \to P$ and $F(\G_u):P\to \unit$. By the
  definition of~$F'$ and Proposition \ref{P:alg}, Claim \eqref{I:Palg-fg}, we have 
  $$ F'(B^3,\G)=\mt_P(F(\G_l)F(\G_u)) =\sum_i \mt_P(F(\G_l)y_i) \, \mt_P(x_iF(\G_u)).
  $$
  Since, by definition, $F'(\G'_l)=\mt_P(F(\G_l)y_i)$ and
  $F'(\G'_u)=\mt_P(x_iF(\G_u))$, this computation verifies  
 Condition  (4) for   $(B^3,\G)$ and~$D$. 
  This concludes the proof of the  induction
 base.

  Assume now that we have an integer $g\geq 1$   and an invariant $F'$ defined for all colored ribbon graphs on  multi-handlebodies  of
  genus $<g$ and  satisfying Conditions (1)-(4). 
   Let $(H,\G)\in \Hp$ where $H$ is a multi-handlebody of genus~$g$.   Pick an oriented properly embedded disc $D\subset H$     such that  $\partial D$ is an essential simple closed curve   on $\partial H$ missing all the coupons and  intersecting all the edges of~$\G$ transversely.  Cutting $(H,\G)$ along~$D$ produces a 
  multi-handlebody $(\cut_D(H),\cut_D(\G))$ of genus $g-1$ which we  denote simply $\cut_D(\G)$.  Set $F'(H,\G)=F'(\cut_D(\G))$.  We claim that  $F'(H,\G)$ is well defined, i.e.\ is independent of the choice of the disc~$D$.  Pick another oriented properly embedded disc   $D'\subset H$    as above.
  We will prove   that $F'(\cut_D(\G))=F'(\cut_{D'}(\G))$.   
Assume first $\partial D\cap \partial D'=\emptyset$.  Deforming, if necessary, $D'$ in~$H$, we can   assume that  $D\cap D'=\emptyset$. Then 
  $$F'(\cut_{D'}(\G))=F'(\cut_D(\cut_{D'}(\G)))=
  F'(\cut_{D'}(\cut_{D}(\G))) =F'(\cut_D(\G))$$ where
  the first and the last equalities hold because, by the induction assumption, $F'$ satisfies  Condition (3)  
  in   genera $<g$.
  (This in particular proves that $F'(\cut_D(\G))$ does not depend on the orientation of $D$: choose $D'$ to be parallel to $D$ but oriented in  the opposite way.) If $\partial D\cap \partial D' \neq \emptyset$, then  it follows from \cite{MM}, that there is a sequence
  $D_0=D, D_1,\ldots, D_n=D'$ of oriented properly embedded discs in~$H$ bounded by essential closed curves in $\partial H$ and such that
$D_i\cap D_{i+1}=\emptyset$ for $i=0,1, ..., n-1$.
By the above,
  $$  F'(\cut_{D_0}(\G))=F'(\cut_{D_1}(\G))=\cdots=F'(\cut_{D_n}(\G))  .$$ It is clear now that  $F'$ is defined for all colored ribbon graphs on  multi-handlebodies  of
  genus $<g+1$ and  satisfies Conditions (1)-(4).  In particular, to check Condition (1), consider an  orientation preserving diffeomorphism $f: (H, \Gamma)\to (H', \Gamma')$ of colored ribbon graphs on multi-handlebodies of genus $\leq g$. Pick a disc $D\subset H$ as above.  By definition and induction  assumption,
 $$F'(H,\G)=F'(\cut_D(\G)) =F'(\cut_{f(D)}(\G')) = F'(H',\G').$$ This completes the induction step.

 \section{Invariants of bichrome graphs and   3-manifolds}\label{S:InvBich3Man}
\subsection{Bichrome  handlebody graphs}
For   reader's convenience, we   summarize here  the terminology introduced above. A \emph{multi-handlebody} is a disjoint union of a finite number of oriented 3-dimensional handlebodies.  
 The \emph{genus} of a multi-handlebody is  the sum of the genera of   its components.   A \emph{bichrome handlebody graph} is a pair $(H,\Gamma)$ where $H$ is a multi-handlebody and   $\Gamma$ is a  finite graph on $\partial H$ considered up to isotopy and formed by two disjoint subgraphs: $\Gamma=\Gamma_{blue}\sqcup \Gamma_{red}$ where $\Gamma_{blue}\subset \partial H$ is a  $\cat$-colored graph and $\Gamma_{red}\subset \partial H$ a union of unoriented disjoint  simple closed  curves called \emph{red circles}. A bichrome handlebody graph $(H,\Gamma)$ is \emph{admissible}
if its blue subgraph is $\I$-colored and meets each connected component
of~$H$.     We will sometimes  drop~$H$ and denote a bichrome handlebody graph $(H,\Gamma)$ simply by  $\Gamma$.
Finally, we let   $\Hb$ denote the set of orientation preserving
diffeomorphism classes of admissible bichrome handlebody graphs. 
\begin{definition} Let $H$ be a multi-handlebody.
  \begin{enumerate}
   \item By a \emph{circle} in $\partial H$ we  mean an (unoriented) simple closed curve in $\partial H$.
   
   \item We say that a set of disjoint   circles   in $\partial H$ \emph{bounds} in $H$ if these circles bound a disjoint union of discs embedded in $H$.
      \item A bounding set of circles  in $\partial H$ is a \emph{reducing set} if the
     complement of these circles  in $\partial H$  is a disjoint union of 2-spheres with holes.
  \item The \emph{complexity} of a circle $\gamma$ in $\partial H $ is the minimal
    number of intersections of $\gamma$ with a  reducing set of
    circles.
\end{enumerate}
\end{definition}

\begin{definition}[Red Capping and Digging moves]\label{def:digcap}
 Let  $(H,\Gamma)$ and $ (H',\Gamma')$ be bichrome handlebody graphs.  
We say that $(H,\Gamma)$ is obtained from $(H',\Gamma')$ by a \emph{red capping move}  along a red circle $c\subset \Gamma'_{red}$ if 
\begin{enumerate}
\item there is  a properly embedded  disc $D\subset  H'$ such that the circle $\partial D \subset \partial H'$  transversely intersects~$c$ in one point, 
\item $H$ is obtained from $H'$ by attaching a $2$-handle along $c$, and 
\item $\Gamma=\Gamma'\setminus c$, where we identify the complement in $\partial H' $ of an annulus neighborhood of~$c$ with a subset of $\partial H$.
\end{enumerate}
Conversely, we say $(H',\Gamma')$ is obtained from $(H,\Gamma)$ by a \emph{red digging move} (producing the red circle $c$), see Figure \ref{F:dig}.   
\end{definition}
\begin{figure}
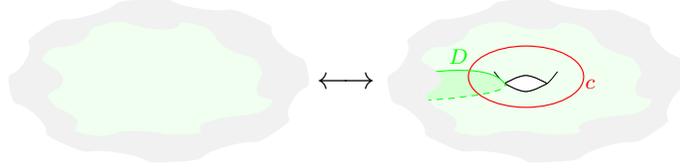

\begin{center}
$$
\epsh{fig17}{12ex}
\longleftrightarrow \epsh{fig18b}{12ex}\put(-32,0){\color{red}{\ms{c}}}\put(-75,8){\color{green}{\ms{D}}}
$$
\caption{Red Digging / Capping moves}
\label{F:dig}
\end{center}
\end{figure}

\begin{proposition}\label{prop:complexityonedigging} 
A red circle in a bichrome handlebody graph is the result of a red digging move if and only if it has complexity one.
\end{proposition}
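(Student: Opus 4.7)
The strategy is to prove both implications by relating the digging condition to standard facts in handle theory. The forward direction is the more delicate one and will require a careful choice of meridian system.

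($\Rightarrow$) Suppose $c$ is the result of a digging move, so by Definition~\ref{def:digcap} there exists $(H,\Gamma)$ with a properly embedded disc $D\subset H'$ satisfying $|\partial D\cap c|=1$, and $H'$ arises from $H$ by drilling a tubular neighborhood of an arc $\alpha$ properly embedded in $H$ (the co-core of the 2-handle attached in the inverse capping move). Since $\alpha$ lies inside a 3-ball in $H$ (namely the attached 2-handle itself), I would choose a handle decomposition of $H$ with a 0-handle containing $\alpha$ and then pick a meridian disc system $\{E_1,\dots,E_{g_H}\}$ of $H$ whose discs avoid that 0-handle, hence are disjoint from $\alpha$. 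These discs persist in $H'=H\setminus N(\alpha)$, their boundary circles $\gamma_i$ lie in $\partial H'\setminus N(c)$, and together with $\partial D$ (arranged transverse and disjoint from the $E_i$, since they correspond to different 1-handles of $H'$) they form a reducing set of $\partial H'$. This reducing set meets $c$ in exactly the single point $\partial D\cap c$, so the complexity of $c$ is at most one. For the matching lower bound, if some reducing set were disjoint from $c$ then $c$ would lie in a sphere-with-holes component of the cut-open surface and hence $[c]$ would belong to the Lagrangian of meridians in $H_1(\partial H';\mathbb{Z}/2)$, forcing the mod-2 intersection $[c]\cdot[\partial D]=0$ and contradicting $|c\cap \partial D|=1$.

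($\Leftarrow$) Suppose $c$ has complexity one, witnessed by a reducing set $\{\alpha_1,\dots,\alpha_g\}$ of $\partial H'$ with $|\alpha_1\cap c|=1$ and $\alpha_i\cap c=\emptyset$ for $i\geq 2$, and let $D_1\subset H'$ be the disc bounded by $\alpha_1$. Then $D_1$ fulfils condition (1) of Definition~\ref{def:digcap}, and I would construct $(H,\Gamma)$ by attaching a 2-handle to $H'$ along $c$ and setting $\Gamma=\Gamma'\setminus c$. The handle cancellation lemma in dimension three (a 2-handle attached along a curve meeting one meridian disc of a cut system transversely once and disjoint from the others cancels the corresponding 1-handle) guarantees that $H$ is again a multi-handlebody, of total genus one less than $H'$; admissibility of $(H,\Gamma)$ is automatic since only a red component is discarded and the blue subgraph is unchanged. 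By construction $(H',\Gamma')$ is obtained from $(H,\Gamma)$ by the digging move producing $c$.

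The main obstacle is the forward direction, specifically producing a meridian disc system of $H$ disjoint from the drilled arc $\alpha$. This rests on the standard observation that any properly embedded arc contained in an embedded 3-ball of a multi-handlebody can be isotoped into a 0-handle of some handle decomposition, after which the 1-handle meridians can be chosen to avoid that 0-handle. Modulo this topological fact, both directions are quick.
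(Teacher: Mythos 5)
Your proof is correct, and on the direction ``complexity one $\Rightarrow$ result of a digging move'' it coincides with the paper's argument: both attach a $2$-handle to $H'$ along $c$, take the disc bounded by the reducing circle meeting $c$ once as the disc $D$ of Definition \ref{def:digcap}, and rely on handle cancellation (which the paper leaves implicit) to see that the capped-off manifold is again a multi-handlebody. Where you genuinely diverge is the converse. The paper disposes of it in one sentence --- ``$c$ intersects exactly once the circle $\partial D$'' --- which by itself only records the intersection with a single curve; it neither exhibits a reducing set meeting $c$ exactly once nor explains why the complexity cannot be zero. You supply both missing pieces: the upper bound via an explicit reducing set consisting of $\partial D$ together with the boundaries of a meridian disc system of the capped-off handlebody $H$ chosen to miss the co-core arc (your reduction of that arc into a $0$-handle is legitimate, since the co-core of the attached $2$-handle is boundary-parallel in $H$, and the resulting discs survive in $H'$ disjoint from $c$); and the lower bound via the observation that the kernel of $H_1(\partial H';\mathbb{Z}/2)\to H_1(H';\mathbb{Z}/2)$ is isotropic, so a circle disjoint from some reducing set would have even mod-$2$ intersection with $\partial D$, contradicting $|c\cap\partial D|=1$. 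The cost of your route is the auxiliary handle-decomposition argument, which you rightly flag as the one point needing care; what it buys is a proof that actually pins the complexity at exactly one rather than merely exhibiting a single transverse intersection point, making your write-up the more complete of the two.
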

\begin{proof}
Let $c$ be a red circle in a bichrome handlebody graph $(H',\Gamma')$.  If $c$ has complexity one then there is a reducing set $S$ of circles  in $\partial H'$  such that $c\cap S=\{pt\}$.
Attaching a 2-handle to $H'$  along $c$, we get  a multi-handlebody~$H$, and the graph $ (H,\Gamma'\setminus c)$ is obtained from $(H',\Gamma')$ by a red capping move. Its  inverse is the required  digging move. 
Conversely, if $c$ is obtained by a red digging move then it  intersects exactly once the circle $\partial D$ in Definition \ref{def:digcap}.
\end{proof}
\begin{proposition}\label{prop:sliding}
  Let $(H,\Gamma), (H,\Gamma') \in\Hb$ be bichrome graphs such that $\Gamma'$ is obtained from $\Gamma$  by sliding a blue or red edge of $\Gamma$ over one of its  red circles.  Then $\Gamma$
  and $\Gamma'$ are related by a sequence of red digging and capping moves.
\end{proposition}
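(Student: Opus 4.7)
The plan is to realize the slide of an edge $e$ over a red circle $c$ (via a band $b \subset \partial H$ from $e$ to $c$) as a single dig-cap pair whose net effect on the handlebody $H$ is trivial (the added 1-handle and 2-handle cancel), while the net effect on the graph is the desired slide, induced by the non-trivial diffeomorphism of $\partial H$ that the dig-cap composition implements.

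First I would perform a dig move: drill a properly embedded arc $\alpha \subset H$ with endpoints on $\partial H$ placed close to the two endpoints of the band $b$, producing a handlebody $H'$ of genus $g(H)+1$. The new red circle $c_0 \subset \partial H'$ is chosen as the longitude of the new 1-handle, whose closing arc on $\partial H$ is routed so that $c_0$ is isotopic to $c$ on $\partial H$. The decisive geometric choice is to route $\alpha$ through the interior of $H$ so that the meridian of the new handle, drawn on $\partial H'$, crosses $c$ transversely in exactly one point. This ensures that $c$ acquires a dual compressing disc in $H'$ (the cocore of the new 1-handle, boundary-connected to the meridian disc as needed).

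Next I would cap along the original red circle $c$. The cap is valid by the dual-disc condition just arranged; it removes $c$ from the graph and attaches a 2-handle to $H'$ along $c$. Since $c$ meets the meridian of the new handle once, the attached 2-handle cancels the 1-handle added in the dig, so the resulting handlebody is diffeomorphic to $H$. The remaining red circle is $c_0$, which is isotopic to $c$ on the recovered $\partial H$; identifying $c_0$ with $c$ via isotopy, the red part of the graph is unchanged. The essential point is that the surface identification $\partial H' \to \partial H$ induced by this cap (composed with the inclusion of $\partial H$ into $\partial H'$ from the dig) is not isotopic to the identity on $\partial H$: it realizes a point-pushing map along $c$, which carries $e$ to the slid edge $e' = e \#_b c$. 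Thus the dig-cap pair implements the slide.

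The hardest step is verifying that the induced mapping class of the dig-cap composition is exactly the point-pushing map along $c$, rather than its composition with, say, a Dehn twist or some other element of the mapping class group of $\partial H$. I would handle this by reducing to a local model: take a 3-ball $B \subset H$ whose intersection with $\partial H$ is a disc containing the band $b$ together with short arcs of $e$ and $c$ near the band's endpoints. Inside $B$, the dig-cap is entirely explicit (a small 1-handle of $B$ cancelled by a small 2-handle along $c$), and the induced diffeomorphism of the disc $B \cap \partial H$ can be computed by hand to equal the point-push along the arc of $c$. Because dig and cap moves only affect a neighborhood of the handles involved, the rest of $\Gamma$ is unchanged, so the local computation globalizes to the claimed identity $\Gamma \mapsto \Gamma'$. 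The argument is uniform in whether $e$ is blue or red, since the dig-cap manipulates only the ambient surface identification and the red circle $c$; the colour of $e$ is irrelevant to the point-pushing mechanism.
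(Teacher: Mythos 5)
Your opening move is the same as the paper's: drill a tunnel straddling $c$ so that $c$ acquires a dual compressing disc (complexity one, in the language of Proposition \ref{prop:complexityonedigging}), then cap along $c$. But from there the argument has a genuine gap. The paper does \emph{not} try to make a single dig--cap pair realize the slide; it uses two pairs. After capping along $c$, the slid edge $e\#_b c$ contains a parallel copy of $c$ that now bounds the disc added by the cap, so the slide becomes an honest \emph{isotopy} of $e$ in the intermediate handlebody; one then re-digs along that same disc to recreate $c$ and re-caps along the auxiliary circle $c'$ to restore $H$. Every step is either a move from Definition \ref{def:digcap} or an isotopy, and no mapping-class computation is needed. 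Your version instead asserts that the boundary re-identification induced by the handle cancellation is exactly the point-push along $c$, which is precisely the content one needs to prove, and the verification you offer cannot work: a 2-handle is attached along the \emph{entire} circle $c$, not along a short arc of it, so the cap is not supported in any ball $B$ meeting $c$ in a short arc; likewise the point-push along $c$ is supported in an annular neighborhood of all of $c$, and the slid edge $e\#_b c$ contains a full parallel copy of $c$, so it does not live in $B\cap\partial H$. The ``local computation globalizes'' step therefore fails exactly where the slide happens.

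There is a second problem with your first move. Per Definition \ref{def:digcap}, the red circle produced by a digging move must be a circle $c_0$ such that capping along $c_0$ --- attaching a 2-handle along $c_0$ and transporting the graph by the canonical identification --- returns the original pair $(H,\Gamma)$. That forces $c_0$ to be the meridian of the drilled tube (in the paper's notation, $c'=\partial\mathrm{Tub}(i(1))$), since that is the curve along which the 2-handle refills the tunnel identically. Your $c_0$ is a longitude running through the tunnel and parallel to $c$; capping along it yields a handlebody abstractly diffeomorphic to $H$ only via a nontrivial re-gluing, so condition (3) of the definition is not satisfied by the obvious identification --- justifying it would require the very same mapping-class analysis you defer. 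Finally, even granting the point-push claim, you would still need to check that the other red circles and blue edges of $\Gamma$ are fixed and that $c_0$ lands exactly on $c$ after the cancellation; none of this is addressed. The fix is to follow the paper: cap along $c$, isotope $e$ across the new disc, re-dig, re-cap.
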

\begin{proof}
Let $\Gamma=\Gamma_{blue}\cup\Gamma_{red}$ and let $c\subset \Gamma_{red}$ be  a red  circle on which we want to slide an edge $e$ of   $\Gamma $. 
Claim:  up to applying one red digging move which transforms $(H,\Gamma)$ into a new bichrome graph $(H_1,\Gamma_1)$ and creates a new red circle $c'\subset \Gamma_1$, we can reduce ourselves to the case where $c$ is a red circle created by a red digging move. 
This claim would imply the proposition because after applying a red capping move on $c$, we get a bichrome graph $(H_2,\Gamma_2)$ in which the edge $e$ can be slid, by an isotopy, over the disc added by the red capping. Re-digging along the same disc and re-capping along $c'$, we get   the bichrome graph $(H,\Gamma')$ obtained by sliding $e$ over $c$.  

The claim above is proved  by describing a suitable red digging move. 
Let $I\subset \partial H$ be a parametrized segment, i.e.\ the image of an embedding $i:[-1,1]\hookrightarrow\partial H$ such that $\partial I\cap \Gamma=i(\{0\})$ is formed by a single point belonging to $c$.  Let $I'$ be a properly embedded arc in $H$ obtained by slightly pushing $I$ inside $H$ (keeping $\partial I$  fixed).  Also, set 
$$
H_1=H\setminus Tub(I'), \;\; c'=\partial Tub(i(1)) 
, \;\;
(\Gamma_1)_{blue}=\Gamma_{blue}, \;\;  (\Gamma_1)_{red}=\Gamma_{red} \sqcup c'
$$
where  $Tub$ stands for  a tubular neighborhood and we view $\partial H\setminus Tub(\{i(\pm1)\})$ as  a subset of $\partial H_1$.
Then $(H_1,\Gamma_1)$ is obtained from $(H,\Gamma)$ by a red digging move along $I$ which creates the red circle $c'$.   
But, in the notation of Definition \ref{def:digcap}, $c$ intersects the disc $D$ bounded by $I\cup I'$ exactly once so it has complexity~$1$. By Proposition \ref{prop:complexityonedigging}, $c$ is created by a red digging  move.
\end{proof}

\subsection{Heegaard splittings}\label{SS:HeegSplitting}
A \emph{Heegaard splitting} for a closed  connected oriented 3-manifold $M$ is an ordered triple $(H_\alpha,H_\beta,\Sigma)$ where: (1) $H_\alpha$ and $H_\beta$ are   handlebodies   embedded in $M$ and endowed with orientation induced from that of~$M$, (2)  $H_\alpha \cap H_\beta= \partial H_\alpha = \partial H_\beta$; (3) the surface $\Sigma=\partial H_\alpha = \partial H_\beta $   is oriented as the boundary of $H_\alpha$ (with the outgoing vector first convention).  Note that under these assumptions we have  $ H_\alpha \cup H_\beta=M$.
A \emph{Heegaard diagram} of $M$ compatible with the Heegaard splitting $(H_\alpha,H_\beta,\Sigma)$ is a
triple $(\Sigma, \{\alpha_i\}, \{\beta_i\})$ where $\{\alpha_i\}$ and
$\{\beta_i\}$ are minimal reducing sets of circles in~$\Sigma$ bounding  in $H_\alpha$
and $H_\beta$, respectively.
Clearly,  from the Heegaard diagram  one can recover $H_\alpha$, $H_\beta$,
and $M= H_\alpha\cup_\Sigma H_\beta$ up to diffeomorphism.

By Section \ref{SS:InvKuni},   a Heegaard diagram
$(\Sigma,\{\alpha_i\}, \{\beta_i\})$ of~$M$ determines a bichrome diagram for~$M$ represented by the bichrome
handlebody graph $(H_\alpha,\Gamma_{blue}\cup \Gamma_{red})$ where
$\Gamma_{red}=\{\beta_i\}$ and $\Gamma_{blue}$ is the planar ribbon
graph $ \IP$ which is the braid closure of the coupon filled with $h$
such that $\mt_G(h)=1$.  (Since $H_\alpha\setminus \Gamma_{red}$ is connected,  the position of  $ \IP$ is unique up to isotopy.)

\begin{theorem}\label{T:HandlebodyDiagsM}
Any two bichrome diagrams for $M$ are related by a finite sequence of red digging and capping moves.
\end{theorem}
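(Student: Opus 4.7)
Fix throughout the choice of $h\colon G\to G$ with $\mt_G(h)=1$ used to build $\IP$. The argument combines two classical facts about Heegaard splittings with Proposition~\ref{prop:sliding}: the Reidemeister--Singer theorem (any two Heegaard splittings of $M$ admit a common stabilization) and the fact that any two minimal reducing systems of circles for a fixed handlebody are related by a finite sequence of isotopies and handle slides. The strategy is to first reduce to the case of diagrams coming from the same Heegaard splitting, then to the case of the same reducing system of $\beta$-curves.

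\emph{Stabilization as a digging move.} A stabilization of $(H_\alpha, H_\beta, \Sigma)$ takes place in a 3-ball $B \subset M$ meeting $\Sigma$ in a disc: a cylindrical tube is drilled out of $B \cap H_\beta$ and attached to $H_\alpha$, raising both genera by one. The stabilized splitting $(H'_\alpha, H'_\beta)$ comes with a distinguished pair of curves on $\Sigma' = \partial H'_\alpha = \partial H'_\beta$: the meridian $\beta_0$ of the new handle (bounding a disc in $H'_\beta$) and its longitude $\alpha_0$ (bounding a disc in $H'_\alpha$), which meet transversely in a single point. The single intersection with $\alpha_0$ shows that $\beta_0$ has complexity one in $\partial H'_\alpha$, so by Proposition~\ref{prop:complexityonedigging} the passage from $(H_\alpha, \IP \cup \{\beta_i\})$ to $(H'_\alpha, \IP \cup \{\beta_i\} \cup \beta_0)$ is a red digging move. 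Destabilization is the inverse red capping move.

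\emph{Different reducing systems.} Suppose two bichrome diagrams share the same Heegaard splitting. Since the diagram records only the handlebody $H_\alpha$ and the red curves $\{\beta_i\}$ (not the $\alpha$-curves), it suffices to compare two minimal reducing systems $\{\beta_i\}$ and $\{\beta'_i\}$ for $H_\beta$. A finite sequence of ambient isotopies and red-over-red handle slides relates them; isotopies are absorbed into the equivalence in $\Hb$, and each handle slide is realized by red digging and capping moves via Proposition~\ref{prop:sliding}. Because a minimal reducing system cuts $\partial H_\alpha$ into a single $2g$-punctured sphere (hence connected), the blue subgraph $\IP$ can always be isotoped into any prescribed disc of the complement, so its location does not create additional obstructions.

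\emph{Conclusion and main obstacle.} Putting the two steps together: given any two bichrome diagrams for $M$, apply Reidemeister--Singer to reach a common stabilized Heegaard splitting (a chain of red digging moves), then use Proposition~\ref{prop:sliding} to turn one reducing system into the other (digging/capping sequences), and finally isotope $\IP$ to the correct spot. The whole chain sits in $\Hb$. The principal technical point is the identification in the first step: one must check that the local picture of a Heegaard stabilization coincides, after an isotopy in $M$, with the local model of a digging move in Figure~\ref{F:dig}, and that the newly produced red circle is precisely~$\beta_0$ up to isotopy in $\partial H'_\alpha \setminus (\IP \cup \{\beta_i\})$.
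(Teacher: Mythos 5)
Your proposal is correct and follows essentially the same route as the paper: invoke the Reidemeister--Singer/Singer theorem to relate the red curve systems by stabilizations, destabilizations, and handle slides, observe that a stabilization is a red digging move (the new meridian meets the dual disc once, so Proposition~\ref{prop:complexityonedigging} applies), realize handle slides via Proposition~\ref{prop:sliding}, and use that $\IP$ sits in a disc of the connected complement of the reducing system so its position causes no obstruction. Your write-up is merely more explicit than the paper's about why the stabilization's new meridian is exactly the circle produced by a digging move.
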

\begin{proof}
Let  $(H,\Gamma)$ and $(H',\Gamma')$ be bichrome diagrams for $M$. It is well known that $(H,\Gamma_{red})$ and $(H',\Gamma'_{red})$
  can be related by a finite sequence of stabilization moves, 
  the  inverse moves, and  handle slide moves (see \cite{Si}).
  A stabilization move is a special case of a red digging move
 (and its inverse is a red capping move). By Proposition
  \ref{prop:sliding}, handle slides are compositions of  isotopies with  red
  digging and capping moves. All these moves can be applied here because $\Gamma_{blue}=\IP$ lies  in a disc in $\partial H$.
\end{proof}

\subsection{Proof of Theorems \ref{T:ExtenFtoRedBlue} and \ref{T:MainExKup}}\label{SS:ProofOfMainT}
We start with  lemmas.  
\begin{lemma}\label{L:HypothOfdd_P} Let $P,P'\in \ideal$ and $$\Lambda_{P'\otimes G^*}=\sum_i y_ix_i\in\End_\cat(P'\otimes G^*)\,\, \, {\rm {where}} \,\,\, \Omega_{P'\otimes G^*}=\sum x_i\otimes_\kk y_i.$$ Then 
 \begin{equation}\label{E:HypothOfdd_P}(\Id_{P'}\otimes\lev_G\otimes\Id_P)\circ(\Lambda_{P'\otimes G^*}\otimes\dd_P)
  \circ(\Id_{P'}\otimes\rcoev_G\otimes\Id_P)=\Id_{P'\otimes P}.
  \end{equation}
\end{lemma}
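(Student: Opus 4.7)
The strategy is to reduce \eqref{E:HypothOfdd_P} to the defining identity \eqref{E:HypothOfdd} of the chromatic morphism on $G$ by expressing both $\Lambda_{P'\otimes G^*}$ and $\dd_P$ as ``conjugates'' of $\Lambda=\Lambda_{G\otimes G^*}$ and $\dd$ through retractions of $P'$ and $P$ from~$G$, and then collapsing after applying \eqref{E:HypothOfdd}.

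First, since $G$ is a generator of $\I$, I fix a decomposition $\Id_{P'}=\sum_k f'_k g'_k$ with $f'_k:G\to P'$ and $g'_k:P'\to G$, and I use the decomposition $\Id_P=\sum_j f_j g_j$ supplied by \eqref{eq:factorG}. The crucial auxiliary identity I establish first is
$$
\Lambda_{P'\otimes G^*}=\sum_k (f'_k\otimes\Id_{G^*})\,\Lambda\,(g'_k\otimes\Id_{G^*}),
$$
which follows from Proposition~\ref{P:alg}(\ref{I:PalgNatural}) applied to $\phi=f'_k\otimes\Id_{G^*}:G\otimes G^*\to P'\otimes G^*$: that proposition yields $(f'_k\otimes\Id_{G^*})\circ\Lambda=\Lambda_{P'\otimes G^*}\circ(f'_k\otimes\Id_{G^*})$, after which I post-compose with $g'_k\otimes\Id_{G^*}$ and sum over~$k$.

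Substituting this expansion together with the definition \eqref{eq:dp} of $\dd_P$ into the left-hand side of \eqref{E:HypothOfdd_P} produces a double sum over $k$ and~$j$. By bifunctoriality of $\otimes$, I pull the ``outer'' morphisms $f'_k\otimes f_j$ past $\Id_{P'}\otimes\lev_G\otimes\Id_P$ and the ``inner'' morphisms $g'_k\otimes g_j$ past $\Id_{P'}\otimes\rcoev_G\otimes\Id_P$, replacing these partial (co)evaluations by their $G$-counterparts $\Id_G\otimes\lev_G\otimes\Id_G$ and $\Id_G\otimes\rcoev_G\otimes\Id_G$. What remains sandwiched in the middle is exactly $(\Id_G\otimes\lev_G\otimes\Id_G)(\Lambda\otimes\dd)(\Id_G\otimes\rcoev_G\otimes\Id_G)$, which equals $\Id_{G\otimes G}$ by \eqref{E:HypothOfdd}. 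The outer sum then collapses to $\sum_{k,j}(f'_k g'_k)\otimes(f_j g_j)=\Id_{P'}\otimes\Id_P=\Id_{P'\otimes P}$.

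The main obstacle is entirely bookkeeping: tracking the tensor ordering of the factors $P'$, $G$, $G^*$, $P$ as the various partial evaluations and coevaluations act, and verifying that bifunctoriality of $\otimes$ licenses each sliding operation in the strict monoidal setting. A reassuring byproduct of the argument is its independence of the chosen decomposition $\Id_{P'}=\sum_k f'_k g'_k$, consistent with the fact that $\Lambda_{P'\otimes G^*}$ itself does not depend on such a choice.
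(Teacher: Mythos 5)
Your proposal is correct and follows essentially the same route as the paper's proof: both use the generator decompositions $\Id_P=\sum_j f_jg_j$, $\Id_{P'}=\sum_k f'_kg'_k$, the naturality statement of Proposition~\ref{P:alg}(\ref{I:PalgNatural}) applied to $\phi=f'_k\otimes\Id_{G^*}$, the definition \eqref{eq:dp} of $\dd_P$, and the defining identity \eqref{E:HypothOfdd} of the chromatic morphism. The only difference is direction — the paper expands $\Id_{P'\otimes P}$ into the left-hand side, while you contract the left-hand side down to $\Id_{P'\otimes P}$ — which is an immaterial reorganization of the same computation.
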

\begin{proof}
Since $G$ is a generator of $\I$ there exist $g_i:P\to G, f_i:G\to P$ and $g'_i:P'\to G, f'_i:G\to P'$ such that $\sum_i f_i\circ g_i=\Id_P$ and $\sum_i f'_i\circ g'_i=\Id_{P'}$. 
Then  
\begin{align*}
\Id_{P'\otimes P}&=\sum_{i,j} (f'_i\otimes f_j)\circ \Id_{G\otimes G}\circ (g'_i\otimes g_j)\\
&=\sum_{i,j}  (f'_i\otimes\lev_G\otimes f_j)\circ(\Lambda\otimes\dd)
  \circ(g'_i\otimes\rcoev_G\otimes g_j)\\
&=\sum_{i} (f'_i\otimes\lev_G\otimes \Id_P)\circ(\Lambda\otimes\dd_{P})
  \circ(g'_i\otimes\rcoev_G\otimes\Id_P)\\
&  =\sum_{i} (\Id_{P'}\otimes\lev_G\otimes \Id_P)\circ 
 \left(\left((f'_i\otimes \Id_{G^*})\Lambda (g'_i\otimes\Id_{G^*})\right)\otimes\dd_{P}\right) \circ
 \\& \hspace{50pt}\circ (\Id_{P'}\otimes\rcoev_G\otimes\Id_P)\\
&  =\sum_{i} (\Id_{P'}\otimes\lev_G\otimes \Id_P)\circ \left(\left(\Lambda_{P'\otimes G^*}\circ(f'_ig'_i\otimes \Id_{G^*})\right)\otimes\dd_{P}\right)\circ
\\
 & \hspace{50pt}\circ (\Id_{P'}\otimes\rcoev_G\otimes\Id_P)\\
&  =\sum_{i} (\Id_{P'}\otimes\lev_G\otimes \Id_P)\circ(\Lambda_{P'\otimes G^*}\otimes\dd_{P})
  \circ(\Id_{P'}\otimes\rcoev_G\otimes\Id_P)
  \end{align*}
  where the second equality uses the definition of the chromatic morphism (Formula   \eqref{E:HypothOfdd}), the third equality uses the definition of $\dd_{P}$ (Formula  \eqref{eq:dp}) and 
  the second to last equality comes from Part \eqref{I:PalgNatural} of Proposition \ref{P:alg} where $\phi=f'_i\otimes \Id_{G^*}$.
    \end{proof}

Given a bichrome handlebody graph $(H,\Gamma)$  we can produce a new bichrome handlebody graph $(H',\Gamma')$ by doing a red digging move on $(H,\Gamma)$ then changing the newly created red circle into a blue graph using the chromatic morphism, as in   Formula  \eqref{E:ExtOfF'toRed}.
We say $(H',\Gamma')$ is obtained from $(H,\Gamma)$ by a \emph{blue digging move}.  Conversely, we say $(H, \Gamma)$ is obtained from $(H',\Gamma')$ by a \emph{blue capping move}.    
See Figure \ref{F:BlueDigging} for a pictorial representation of these moves.   
\begin{figure}[htbp]
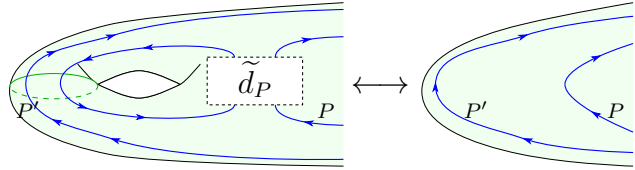

\begin{center}
$$
\epsh{fig15-Francois}{12ex}\put(-8,-8){\ms{P}}\put(-33,0){$\dd_P$}\put(-103,-8){\ms{P'}}
\longleftrightarrow \epsh{fig16-francois}{12ex}\put(-8,-8){\ms{P}}\put(-53,-8){\ms{P'}}
$$
\caption{\emph{Blue Digging and Capping Moves.} Here one can assume the orientation of the leftmost strand is as shown in the figure, cf. Remark \ref{R:fpp**}.}
\label{F:BlueDigging}
\end{center}
\end{figure}

\begin{lemma}\label{P:F'InvIcolBlueDigging}
The invariant $F'$ of $\I$-colored ribbon graphs on multi-handlebodies defined in Theorem  \ref{P:F'onProjHandlebodies} is preserved  under blue digging and capping moves.  
\end{lemma}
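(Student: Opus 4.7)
The plan is to combine the cutting property of $F'$ (Condition (4) of Theorem \ref{P:F'onProjHandlebodies}) with Lemma \ref{L:HypothOfdd_P} in order to express $F'(H',\Gamma')$ in terms of $F'(H,\Gamma)$. Let $(H',\Gamma')$ be obtained from $(H,\Gamma)\in\Hp$ by a blue digging move along a $P$-edge of $\Gamma$. By definition this move is the composition of a red digging, giving $H'=H\setminus Tub(I')$ for some pushed-in arc $I'$, followed by the chromatic transformation of the resulting red circle into the local data of Figure \ref{F:BlueDigging}: a $\dd_P$-coupon encircled by a $G$-loop lying on the newly created handle. I will choose a properly embedded oriented disc $D\subset H'$, transverse to $\Gamma'_{blue}$, whose boundary is a meridian circle of the new handle placed so that $\partial D$ crosses the segment of the original $P$-edge running over the handle exactly once and the $G$-loop exactly twice, with cyclic order of intersections and orientation of $D$ fixed so as to match the tensor order appearing in Lemma \ref{L:HypothOfdd_P}.

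By Condition (4) of Theorem \ref{P:F'onProjHandlebodies}, $F'(H',\Gamma')=F'(\cut_D(H',\Gamma'))$. Cutting along $D$ destroys the new handle, so the resulting multi-handlebody is orientation-preservingly diffeomorphic to $H$. By the rule \eqref{E:OmegaDefMuilt}, the two new coupons produced by the cut are colored by $\Omega_{P'\otimes G^*}$, where $P'$ denotes the color of the segment of the $P$-edge on one side of the cut. The local morphism read off from the $\dd_P$-coupon together with these $\Omega$-coupons and the residual pieces of the $G$-loop (which close up to $\rcoev_G$ and $\lev_G$) is exactly the composition
\begin{equation*}
(\Id_{P'}\otimes\lev_G\otimes\Id_P)\circ(\Lambda_{P'\otimes G^*}\otimes\dd_P)\circ(\Id_{P'}\otimes\rcoev_G\otimes\Id_P)
\end{equation*}
appearing on the left-hand side of Equation \eqref{E:HypothOfdd_P}. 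Applying Lemma \ref{L:HypothOfdd_P}, this composition equals $\Id_{P'\otimes P}$, so $\cut_D(H',\Gamma')$ simplifies to $(H,\Gamma)$ as an object of $\Hp$, and Condition (1) of Theorem \ref{P:F'onProjHandlebodies} yields $F'(\cut_D(H',\Gamma'))=F'(H,\Gamma)$. Invariance under blue capping is then the same statement read backwards.

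The principal difficulty is not algebraic, since Lemma \ref{L:HypothOfdd_P} does the heavy lifting, but rather a careful bookkeeping of conventions. One must verify that the orientations and cyclic order of strands traversing $\partial D$ align correctly with the tensor factors $P'\otimes G^*$ in $\Lambda_{P'\otimes G^*}$ and with $G\otimes P$ in $\dd_P$, and that the residual arcs of the $G$-loop on each side of the cut combine with the $\Omega$-coupons to yield $\rcoev_G$ and $\lev_G$ rather than their opposites. The Remark \ref{R:fpp**} reorientation flagged in the caption of Figure \ref{F:BlueDigging} is relevant here: it allows one to flip the orientation of the relevant strand, without altering $F'$, so as to make the tensor factors match the formula.
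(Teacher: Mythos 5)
Your proposal is correct and follows essentially the same route as the paper: both cut along the meridian disc of the newly dug handle, read off the resulting local morphism as $(\Id_{P'}\otimes\lev_G\otimes\Id_P)\circ(\Lambda_{P'\otimes G^*}\otimes\dd_P)\circ(\Id_{P'}\otimes\rcoev_G\otimes\Id_P)$ via the $\Omega$-coupon rule, and apply Lemma \ref{L:HypothOfdd_P} to conclude it is the identity. The only cosmetic difference is that the paper first cuts along a reducing set to put the relevant component in genus $0$, so that the final step of erasing an identity sub-tangle is justified by computing morphisms of subsurfaces via $F$ rather than by your appeal to Condition (1) (which strictly covers only diffeomorphisms of pairs); your argument needs that one extra reduction to be fully rigorous, but the substance is identical.
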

\begin{proof} Let $(H, \Gamma)$ and  $(H',\Gamma')$ be bichrome handlebody graphs such that $(H', \Gamma')$ is obtained from $(H,\Gamma)$ by a blue digging move.  
Up to 
cutting along a  reducing set of curves for $H$, we can assume that the component of $H$ to which we are applying the blue digging has genus $0$. 
To prove the lemma we  compute the morphisms associated to the subsurfaces of $\partial H$ and $\partial H'$ drawn in Figure \ref{F:BlueDigging}. 
Clearly, the   morphism  associated  with the right hand side is   $\Id_{P'}\otimes \Id_{P}$.  
To compute the morphism, say $f$, appearing  on the left hand side, let $\gamma$ be the curve bounding the disc in the far left part of Figure \ref{F:BlueDigging}. 
Cutting along $\gamma$, as discussed before Theorem \ref{P:F'onProjHandlebodies}, we get that  
$$f=(\Id_{P'}\otimes\lev_G\otimes\Id_P)\circ(\Lambda_{P'\otimes G^*}\otimes\dd_P)
  \circ(\Id_{P'}\otimes\rcoev_G\otimes\Id_P).$$
By Lemma \ref{L:HypothOfdd_P}, $f=\Id_{P'}\otimes \Id_{P}$.
\end{proof}

\begin{proof}[Proof of Theorem \ref{T:ExtenFtoRedBlue}]
We  prove the following slightly stronger claim.

\begin{description}
\item[Claim] There exists a unique extension of $F'$ to an invariant of admissible
 bichrome handlebody graphs
 satisfying Formula  \eqref{E:ExtOfF'toRed} and invariant under the blue capping and digging moves.  
\end{description}
We prove this  by induction on the number of red circles.  When there are no red circles the claim is just Lemma \ref{P:F'InvIcolBlueDigging}.  
Now suppose that the claim holds  for all admissible bichrome handlebody graphs with $n-1$ red circles.  Let $(H,\Gamma)$ be an admissible bichrome handlebody graph with $n$ red circles.  Pick an injective  path $\gamma_0$ in $\partial H \setminus \Gamma$ leading  from a point on a blue edge to a point on a red circle $\beta_0$.  Using  $\gamma_0$,  we  pull a small segment of the blue edge to $\beta_0$ then use the chromatic morphism to change $\beta_0$ into a blue graph.  We obtain an admissible bichrome handlebody graph $(H,\Gamma_{\gamma_0})$ with $n-1$ red circles.  By the induction assumption, $F'(H,\Gamma_{\gamma_0})$ is well defined. So,  if $F'$ exists, then  it is unique. To prove that $F'$ exists we  show its independence  of the choice of $\gamma_0$.    Let $\gamma_1$ be another such path going from a point on a blue edge to a point on a red circle $\beta_1$ and let $(H,\Gamma_{\gamma_1})$ be the admissible bichrome handlebody graph obtained as above using $\gamma_1$ to make $\beta_1$ blue.   
We separate two cases.\\

\noindent \textbf{Case 1.} $\beta_0=\beta_1$.  Here we have two sub-cases.  
First, suppose the red circle $\beta_0=\beta_1$ has complexity one.  By Proposition \ref{prop:complexityonedigging}, the circle  $\beta_0$ is the result of a digging move.  Therefore, when we use   $\gamma_0$ or $\gamma_1$ and the chromatic morphism to change $\beta_0$ into a blue graph, we arrive at a diagram as in the left side of Figure \ref{F:BlueDigging}.  In both cases we can do a blue capping move to arrive at the same $\I$-colored ribbon graph on a multi-handlebody with  $n-1$ red circles.   Thus, induction implies
that $
F'(H,\Gamma_{\gamma_0})=F'(H,\Gamma_{\gamma_1})
$. 
So, in this case the extension of $F'$ does not depend on the choice of $\gamma_0$.  

Second, assume the red curve $\beta_0=\beta_1$ has any complexity.
Apply a blue digging move to $\Gamma$ along a small interval $I$ intersecting $\beta_0$ in a single point.   The result is a new bichrome handlebody graph $(H',\Gamma')$ with $n$ red circles in which the image $\beta_0'$ of $\beta_0$ is a red curve with complexity $1$. Since this digging move only modified $(H,\Gamma)$ in a neighborhood of $I$, we can identify $\gamma_0$ and $\gamma_1$ as paths in $\partial H'\setminus \Gamma'$ and we can apply the chromatic morphism to $\beta'_0$ either through $\gamma_0$ and $\gamma_1$ getting,  respectively, bichrome handlebody graphs $\Gamma'_{\gamma_0}$ and $\Gamma'_{\gamma_1}$  with $n-1$ red components. By the preceding case $F'(H',\Gamma'_{\gamma_0})=F'(H',\Gamma'_{\gamma_1})$.
Observing that $\Gamma_{\gamma_0}'$ (resp. $\Gamma_{\gamma_1}'$) is obtained from $\Gamma_{\gamma_0}$ (resp. $\Gamma_{\gamma_1}$) by a blue digging move,  we get
$$F'(H,\Gamma_{\gamma_0})=
F'(H',\Gamma'_{\gamma_0})=F'(H',\Gamma'_{\gamma_1})=F'(H,\Gamma'_{\gamma_1}).$$

\noindent \textbf{Case 2.}  $\beta_0\neq \beta_1$.  We separate  two sub-cases.  First, suppose $\gamma_0$ and $\gamma_1$ are disjoint.
In $(H,\Gamma_{\gamma_0})$ (resp.\ $ (H,\Gamma_{\gamma_1})$) we can use  $\gamma_1$ (resp.\ $\gamma_0$) to change $\beta_1$ (resp.\ $\beta_0$) into a blue graph and obtain an admissible bichrome handlebody graph $(H,\Gamma_1)$ with $n-2$ red circles.  By the induction assumption, 
$$
F'(H,\Gamma_{\gamma_0})=F'(H,\Gamma_1) = F'(H,\Gamma_{\gamma_1}).
$$ 

Second, suppose $\gamma_0\cap \gamma_1\neq \emptyset$. We claim that there is another path $\gamma'_1$ connecting $\Gamma_{blue}$ to $\beta_1$ such that $\gamma_0\cap \gamma'_1=\emptyset$. Then   by the previous  subcase  $F'(H,\Gamma_{\gamma_0})=F'(H,\Gamma_{\gamma'_1})$ and  by Case 1  we have  $F'(H,\Gamma_{\gamma'_1})=F'(H,\Gamma_{\gamma_1})$. So,  the result  follows.  
To prove our claim observe that since  $\gamma_0\cap \gamma_1\neq \emptyset$. the paths $\gamma_0$ and $\gamma_1$ lie  in the same connected component $R$ of $\partial H\setminus \Gamma$.  Moreover, $R$ is an open orientable surface which is the interior of a compact surface with at least $3$ distinct boundary components: $\partial_{blue}\subset \Gamma_{blue}$,  $\beta_0$ and $\beta_1$.  
Then $\gamma_0$ and $\gamma_1$ are embedded arcs in $R$ connecting $\partial_{blue}$ to $\beta_0$ and $\beta_1$, respectively.  But $R\setminus \gamma_0$ is connected as $\gamma_0$ intersects the closed curve $\beta_0$ once.  Thus, there exists another path in $R\setminus \gamma_0$ connecting $\partial_{blue}$ and $\beta_1$.
\end{proof}

\begin{proof}[Proof of Theorem \ref{T:MainExKup}]
By Theorem \ref{T:HandlebodyDiagsM} it is enough  to show that $F'$ is invariant under red digging and capping moves. 
Let $(H,\Gamma)$ and $(H',\Gamma')$ be bichrome handlebody graphs such that $(H',\Gamma')$ is obtained from $(H,\Gamma)$ by a red digging move.  Suppose that $c$ is the red circle created by  this move.  Let $(H',\Gamma'')$ be the bichrome handlebody graph obtained from using an edge of the blue graph and the chromatic morphism to change the red circle $c$ into a blue graph.  By definition, the composition of these two moves is a blue digging move.  By Lemma \ref{P:F'InvIcolBlueDigging} we have 
 $F'(H,\Gamma)=F'(H',\Gamma'')$. 
Since $(H',\Gamma')$ and $(H',\Gamma'')$ differ by an isotopy and a move represented in Formula  \eqref{E:ExtOfF'toRed},  Theorem \ref{T:ExtenFtoRedBlue} implies  that $F'(H',\Gamma')=F'(H',\Gamma'')$. 
Combining these equalities we get $F'(H,\Gamma)=F'(H',\Gamma')$ which concludes the proof.
\end{proof}

\section{Hopf Algebras and the Kuperberg invariants} \label{SS:KuperbergInv}

In this section we prove Theorems \ref{T:ExHopf} and \ref{T:K=KInvlutive}.  First, we briefly recall some well known facts about Hopf algebras, see for
example \cite{Rad2011}.  

\subsection{Hopf algebra preliminaries} Let $\Ho$ be a finite-dimensional Hopf algebra over a field $\FK$ 
with  multiplication $m : \Ho \otimes \Ho \rightarrow \Ho$,  unit
$\eta : \Bbbk \rightarrow \Ho$,  coproduct
$\Delta : \Ho \rightarrow \Ho \otimes \Ho$,  counit
$\epsilon : \Ho \rightarrow \Bbbk$,  antipode $S : \Ho \rightarrow \Ho$.
A \emph{right integral} of $\Ho$ is a linear form $\lambda \in \Ho^*$ such that 
$\lambda f = f(1_\Ho) \cdot \lambda$ for all  $f \in \Ho^*$.  This means
that
$(\lambda \otimes \Id_\Ho)(\Delta(x))=\lambda(x) \cdot 1_\Ho$ for every
$x \in \Ho$.  A \emph{left (resp.\ right) cointegral} of $\Ho$ is a vector $\Lambda \in \Ho$
satisfying $x \Lambda = \epsilon(x) \Lambda$ (resp.\ $ \Lambda x= \epsilon(x) \Lambda$) for all  $x \in \Ho$.
Since $\Ho$ is finite-dimensional, right integrals form a 1-dimensional
ideal in $\Ho^*$ and left cointegrals form a 1-dimensional ideal in
$\Ho$. Moreover, every non-zero right integral $\lambda \in \Ho^*$ and
every non-zero left cointegral $\Lambda \in \Ho$ satisfy
$\lambda(\Lambda) \neq 0$. We fix a   right integral $\lambda \in \Ho^*$ and   a left
cointegral $\Lambda \in \Ho$ satisfying $\lambda(\Lambda) = 1$.   We will  use sumless Sweedler's notation to describe the   coproduct, for example, we write  $\Delta^3(x)=x_{(1)}\otimes x_{(2)}\otimes x_{(3)}$.

The Hopf algebra $\Ho$ is \textit{unimodular} if
$S(\Lambda) = \Lambda$, or equivalently, if $\Lambda$ is both a right and left cointegral.  We call  $\Ho$  \emph{pivotal} if there exists $g\in \Ho$ such that $S^2(x) = gxg^{-1}$  for all $x \in \Ho$.   Let $\Proj$ be the ideal of projective modules over $\Ho$ (for the definition of projective objects,  see Section \ref{SS:ProjOb}).  By Theorem~1 of \cite{BBG17b} every finite dimensional unimodular pivotal Hopf algebra has a left m-trace on $\Proj$.  Such a Hopf algebra is \emph{unibalanced} if this left m-trace is also a right m-trace.
 
\subsection{Proof of Theorem \ref{T:ExHopf}} \label{SS:ProofOfThmEXHopf} Let $\Ho$ be a finite dimensional unibalanced unimodular pivotal Hopf algebra over a field $\Bbbk$ and let $\Ho$-mod be the category of its finite dimensional left modules.  We denote by the same symbol~$\Ho$   the algebra~$\Ho$  viewed as a left $\Ho$-module   via  the action $L : \Ho \to \End_\kk(\Ho)$ given by 
$L_h(x) = hx$ for all $h,x \in \Ho$.

\begin{lemma}
The  left $\Ho$-module $\Ho$ is a generator of $\Proj$ in $\Ho$-mod.
\end{lemma}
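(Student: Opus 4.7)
The plan is to use the standard characterization of projectives as direct summands of free modules, and then repackage this into the factorization condition defining a generator.

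First I would observe that every object $P$ of $\Ho$-mod is finite dimensional over $\kk$ and hence finitely generated as an $\Ho$-module. Choose generators $p_1,\dots,p_n\in P$; sending the standard generators of the free module $\Ho^{\oplus n}$ to $p_1,\dots,p_n$ produces a surjective $\Ho$-linear map $\pi: \Ho^{\oplus n}\to P$. Next, if $P\in\Proj$, then by the defining lifting property applied to $\pi$ and $\Id_P$ there exists an $\Ho$-linear map $\iota: P\to \Ho^{\oplus n}$ such that $\pi\circ\iota=\Id_P$, exhibiting $P$ as a direct summand of $\Ho^{\oplus n}$.

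Then I would translate this splitting into the factorization form required by Formula \eqref{eq:factorG}. Writing $\iota=(g_1,\dots,g_n)$ with $g_j:P\to\Ho$ and $\pi=(f_1,\dots,f_n)$ with $f_j:\Ho\to P$ (using the universal property of the biproduct $\Ho^{\oplus n}$), the identity $\pi\circ\iota=\Id_P$ becomes
\[
\Id_P = \sum_{j=1}^{n} f_j\circ g_j,
\]
which is exactly the condition that $\Ho$ is a generator of $\Proj$.

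The only nontrivial ingredient is that $\Ho$, viewed as the left regular module, is itself projective in $\Ho$-mod so that it belongs to $\Proj$, but this is immediate since $\Ho$ is free of rank one over itself and any free module is projective. There is no serious obstacle here: the argument is essentially the classical statement that the regular module is a progenerator of the category of finite dimensional modules over any finite dimensional algebra, and the proof consists in unpacking the definitions of projectivity and of generator.
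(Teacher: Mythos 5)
Your proof is correct, and it takes a genuinely more elementary route than the paper. The paper first invokes the Krull--Schmidt theorem to reduce to indecomposable projectives, then uses the fact that each indecomposable projective is a direct summand of a \emph{single} copy of the regular module $\Ho$, and assembles the factorization from there. You instead split the surjection $\Ho^{\oplus n}\to P$ directly and read off $\Id_P=\sum_j f_j\circ g_j$ from the components of the section and retraction; this bypasses Krull--Schmidt and any discussion of indecomposables entirely, which is all that the definition of a generator (Formula \eqref{eq:factorG}) requires, since it only asks for a finite factorization through copies of $G$, not for $P$ to be a summand of $G$ itself. The paper's argument buys the slightly stronger structural fact that every indecomposable projective embeds as a summand of one copy of $\Ho$, but that extra information is not used elsewhere. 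You also correctly check the point the paper leaves implicit, namely that the regular module $\Ho$ is itself projective and hence lies in $\Proj$, which is needed for it to qualify as a generator of the ideal.
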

\begin{proof}
Let $P$ be an indecomposable projective $\Ho$-module.  Then $P$  is a direct summand of a  direct sum of a finite number of copies of~$A$.  Since $P$ is indecomposable,  $P$ is a direct summand of $\Ho$.  Since  the  Krull-Schmidt theorem holds in $\Ho$-mod,  every element of $\Proj$ is a direct sum of indecomposable projective modules and the lemma follows.  
\end{proof}

We let $\mt$ be the non-degenerate m-trace on $\Proj$   normalized so that 
$\mt_\Ho(f_\Lambda \circ \epsilon)
 = 1$ where 
$f_\Lambda : \Bbbk \to \Ho$ is the   morphism carrying $1\in \Bbbk$ to  the left cointegral $ \Lambda\in \Ho$.
For $P\in \Proj$,    the map $\Lambda_P': P\to P$ determined by the left action of $\Lambda$ on $P$ is a morphism because $\Lambda$ is both a left and right cointegral.  Consider also  the morphism $\Lambda_P: P\to P$ given by $\Lambda_P=\sum_i x_iy_i$ where $\Omega_P=\sum_i x_i\otimes_\kk y_i $ is defined by Formula~\eqref{E:DefOmegaP}.   
\begin{lemma}\label{L:LP=LP'}
 $\Lambda_P=\Lambda'_P$.  
\end{lemma}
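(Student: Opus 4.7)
The plan is to reduce to the case $P = \Ho$ by a naturality argument and then compute $\Omega_\Ho$ explicitly. The identification of the main obstacle is in establishing the naturality of $\Lambda'$, for which unimodularity is the key ingredient.

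First, I would verify that both $P \mapsto \Lambda_P$ and $P \mapsto \Lambda'_P$ are natural transformations on $\Proj$. Naturality of $\Lambda_P$ is the second equality of Proposition \ref{P:alg}(2). For $\Lambda'_P$, I need $\phi \circ \Lambda'_P = \Lambda'_{P'} \circ \phi$ for every morphism $\phi: P \to P'$ in $\Proj$. This is where \emph{unimodularity} is essential: since $\Lambda$ is both a left and a right cointegral, $h \Lambda = \epsilon(h)\Lambda = \Lambda h$ for every $h \in \Ho$, and then $\Ho$-linearity of $\phi$ gives $\phi(\Lambda v) = \Lambda \phi(v)$ for all $v \in P$. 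This is the subtlest point of the proof; without unimodularity $\Lambda'_P$ would not even be a morphism in $\Ho$-mod.

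Next, I would invoke the generator property. Since $\Ho$ generates $\Proj$, there exist $f_j: \Ho \to P$ and $g_j: P \to \Ho$ with $\sum_j f_j \circ g_j = \Id_P$. Applying naturality of both $\Lambda$ and $\Lambda'$ yields
\begin{equation*}
\Lambda_P = \sum_j f_j \circ \Lambda_\Ho \circ g_j \et \Lambda'_P = \sum_j f_j \circ \Lambda'_\Ho \circ g_j ,
\end{equation*}
so it suffices to prove $\Lambda_\Ho = \Lambda'_\Ho$.

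Finally, I would compute $\Omega_\Ho$ directly. An element of $\Hom_{\Ho}(\Bbbk, \Ho)$ is determined by the image of $1$, which must be a left cointegral, so this space is spanned by $f_\Lambda$. An element $y \in \Hom_{\Ho}(\Ho, \Bbbk)$ satisfies $y(h x) = \epsilon(h) y(x)$; setting $x = 1_\Ho$ gives $y = y(1_\Ho)\, \epsilon$, so this space is spanned by $\epsilon$. The normalization $\mt_\Ho(f_\Lambda \circ \epsilon) = 1$ exactly says that these one-element bases are dual, so $\Omega_\Ho = f_\Lambda \otimes_\Bbbk \epsilon$ and hence $\Lambda_\Ho(x) = f_\Lambda(\epsilon(x)) = \epsilon(x)\, \Lambda$ for $x \in \Ho$. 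On the other hand, by the right-cointegral property (again using unimodularity) $\Lambda'_\Ho(x) = \Lambda x = \epsilon(x)\, \Lambda$, yielding $\Lambda_\Ho = \Lambda'_\Ho$ and completing the proof.
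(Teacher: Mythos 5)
Your proposal is correct and follows essentially the same route as the paper: both arguments compute $\Omega_\Ho=f_\Lambda\otimes_\kk\epsilon$ from the one-dimensionality of the (co)integral spaces together with the normalization $\mt_\Ho(f_\Lambda\circ\epsilon)=1$, and both pass to general $P\in\Proj$ via the generator decomposition $\Id_P=\sum_j f_jg_j$ and the naturality statement of Proposition \ref{P:alg}\eqref{I:PalgNatural}. One small imprecision: the identity $\phi(\Lambda v)=\Lambda\phi(v)$ is just $\Ho$-linearity of $\phi$ and needs no unimodularity; unimodularity is what makes $\Lambda'_P$ itself an $\Ho$-module morphism and what gives $\Lambda x=\epsilon(x)\Lambda=x\Lambda$, as you correctly note elsewhere.
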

\begin{proof}
First, consider the case when $P=\Ho$.   Since  the space of cointegrals is one dimensional, the space $\Hom_{\Ho\text{-mod}}(\unit, \Ho)$ also is one dimensional and is generated by the morphism  $f_\Lambda : \Bbbk \to \Ho$ defined above.   Combining this with the fact that $\mt$ is non-degenerate (with  the   normalization  above) we get $\Omega_\Ho=f_\Lambda \otimes_\kk \epsilon$.  Thus, for all $x\in \Ho$ we have 
$$
\Lambda_\Ho(x)=f_\Lambda \epsilon (x)=\epsilon(x)\Lambda=x\Lambda=\Lambda'_\Ho(x).
$$
So, the lemma holds for $P=\Ho$.  

Let $P$ be any object in $ \Proj$. Since $\Ho$ is a generator of $\Proj$ in $\Ho$-mod, there is a finite set of pairs of morphisms   $\{f_j:\Ho\to P, g_j:P\to
\Ho\}_{j\in J}$ such that $
  \Id_P=\sum_{j\in J} f_jg_j. 
$
From Proposition \ref{P:alg},  Part \eqref{I:PalgNatural} we get
$$
\Omega_{P}=\sum_{j\in J} \left((f_j \otimes \Id_\unit )\Omega_\Ho (\Id_\unit \otimes g_j) \right)=\sum_{j\in J} \left((f_j \circ f_\Lambda) \otimes (\epsilon \circ g_j) \right).
$$
So by the definition of $\Lambda_P$, for $y\in P$, we have 
\begin{align*}
\Lambda_P(y)=\sum_{j\in J} \left(f_j \circ f_\Lambda \circ \epsilon \circ g_j \right)(y)
&=\sum_{j\in J}f_j(\epsilon(g_j(y))\Lambda)\\
&=\sum_{j\in J}f_j(\Lambda g_j(y))\\
&=\sum_{j\in J}\Lambda f_j(g_j(y))=\Lambda y =\Lambda'_P(y).
\end{align*}
So, $\Lambda_P=\Lambda'_P$.  
\end{proof}

In \cite{DGP} it is shown that the right integral $\lambda \in \Ho^*$ determines a morphism $f_{\lambda,1_\Ho}: \Ho \to \Ho^*\otimes \Ho$  carrying  $1_\Ho$ to $ \lambda \otimes 1_\Ho$.

\begin{lemma}\label{L:Def-d-forHopf}
The morphism $\dd:\Ho\otimes \Ho\to \Ho\otimes \Ho$ defined by
$$\dd=(\rev_\Ho \otimes \Id_{\Ho\otimes \Ho})(\Id_{\Ho}\otimes f_{\lambda,1_\Ho}\otimes \Id_\Ho)(\Id_\Ho\otimes \Delta)$$
 is a chromatic morphism for the generator $\Ho$. 
\end{lemma}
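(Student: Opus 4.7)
The plan is to verify equation \eqref{E:HypothOfdd} with $G=\Ho$ by unpacking both sides explicitly in Sweedler notation and exploiting the compatibility between the right integral $\lambda$ and the two-sided cointegral $\Lambda$ normalized by $\lambda(\Lambda)=1$. The starting point is Lemma~\ref{L:LP=LP'}, which identifies the endomorphism $\Lambda=\Lambda_{\Ho\otimes\Ho^*}\in\End_\cat(\Ho\otimes\Ho^*)$ with the action of the cointegral via the coproduct: for $a\otimes \phi\in \Ho\otimes\Ho^*$,
\[
\Lambda_{\Ho\otimes \Ho^*}(a\otimes \phi)=(\Lambda_{(1)}\,a)\otimes(\Lambda_{(2)}\cdot \phi),
\]
where the $\Ho$-action on $\Ho^*$ is the standard one, $(h\cdot\phi)(v)=\phi(S(h)v)$. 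Since $\Ho$ is unimodular, $\Lambda$ is also a right cointegral, so $S(\Lambda)=\Lambda$, a fact we shall need when transposing through the pivotal duals.

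Next I will write out the composition on the LHS of \eqref{E:HypothOfdd} applied to an arbitrary element $a\otimes b\in\Ho\otimes\Ho$. Using $\rcoev_\Ho(1)=\sum_j e_j^{*}\otimes e_j$ for dual bases, this gives $a\otimes e_j^{*}\otimes e_j\otimes b$, on which $\Lambda\otimes \dd$ acts as above on the first two tensorands, and as $\dd$ on the last two. Expanding $\dd(e_j\otimes b)$ from the definition and the $\Ho$-linearity of $f_{\lambda,1_\Ho}$ (which gives $f_{\lambda,1_\Ho}(x)=(x_{(1)}\cdot\lambda)\otimes x_{(2)}$), one obtains, after coassociativity and absorbing the $\rev_\Ho$ into an evaluation,
\[
\dd(e_j\otimes b)\;=\;\bigl((b_{(1)}\cdot\lambda)(e_j)\bigr)\;b_{(2)}\otimes b_{(3)},
\]
or the analogous pivotally twisted expression depending on the convention for $\rev$. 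Summing the $\rcoev/\lev$ pair collapses the dummy basis $\{e_j\}$ to an evaluation of $e_j^{*}\mapsto e_j$ through the pivotal element, producing a single Sweedler sum purely in terms of the coproducts of $\Lambda$ and $b$, the integral $\lambda$, and the antipode.

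The remaining step is algebraic. After all cancellations the resulting expression has the shape
\[
a\otimes b\;\longmapsto\;\sum \Lambda_{(1)}a\;\otimes\;\lambda\bigl(S(\Lambda_{(2)})\,b_{(1)}\bigr)\,b_{(2)}\otimes b_{(3)},
\]
(possibly with a pivotal twist by the element $g$ on one factor). The crucial identity to invoke here is the Fourier-type relation for a finite-dimensional unimodular Hopf algebra with $\lambda(\Lambda)=1$: one has $\sum \lambda(S(\Lambda_{(2)})\,x)\,\Lambda_{(1)}=x$ for all $x\in \Ho$, equivalently $\sum S(\Lambda_{(2)})\otimes\Lambda_{(1)}$ and $\lambda$ realise the standard non-degenerate pairing between $\Ho$ and $\Ho^{*}$. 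Applying this identity to the factor containing $b_{(1)}$, together with the left-cointegral property $\Lambda_{(1)}\cdot (\text{something}) = \epsilon(\cdot)\Lambda$ when appropriate, reduces the whole expression to $a\otimes b$, proving \eqref{E:HypothOfdd}.

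The main obstacle in this argument is purely bookkeeping: one must carefully align the four strands in the diagram \eqref{E:HypothOfdd} with the Sweedler indices, and track the pivotal element $g$ through the pivotal pairings $\rcoev$ and $\lev$. Unimodularity ($S(\Lambda)=\Lambda$) is what makes the pivotal twists absorb consistently, while unibalancedness is not actually needed at this point (it was used earlier only to ensure the m-trace $\mt$ was two-sided). Once the algebraic identity above is correctly set up, the verification is mechanical.
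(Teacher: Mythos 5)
Your overall strategy---unpack both sides of \eqref{E:HypothOfdd} in Sweedler notation, use Lemma~\ref{L:LP=LP'} to identify $\Lambda_{G\otimes G^*}$ with the action of the cointegral through the coproduct, and close with an integral--cointegral identity---is the same as the paper's. But the execution has a genuine gap at the contraction step. Carrying out the $\rcoev_\Ho/\lev_\Ho$ contraction correctly, the left-hand side of \eqref{E:HypothOfdd} applied to $a\otimes b$ comes out (up to the pivotal twist) as
\[
\sum \Lambda_{(1)}a\otimes \lambda\bigl(S(b_{(1)})\,S(\Lambda_{(2)})\,b_{(2)}\bigr)\,b_{(3)},
\]
an element of $\Ho\otimes\Ho$; your claimed expression lands in $\Ho^{\otimes 3}$ and puts $b_{(1)}$ on the wrong side of $S(\Lambda_{(2)})$ inside $\lambda$, which indicates that the evaluation pairing was never actually performed. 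The step that strips off the $b$-dependence is the identity $\lambda\bigl(S(y_{(1)})\,x\,y_{(2)}\bigr)=\lambda(x)\,\epsilon(y)$ from Lemma~3.8 of \cite{DGP} (equivalently, $\lev_\Ho\circ(\Id_{\Ho^*}\otimes L_x)\circ f_{\lambda\otimes 1_\Ho}=\lambda(x)\cdot\epsilon$), which encodes the \emph{right-integral} property of $\lambda$ and reduces the display to $\lambda(S(\Lambda_{(2)}))\,\Lambda_{(1)}a\otimes b$. This identity is absent from your argument, and the ``Fourier-type relation'' $\sum\lambda(S(\Lambda_{(2)})x)\Lambda_{(1)}=x$ you invoke in its place cannot be ``applied to the factor containing $b_{(1)}$'': the $\Lambda_{(1)}$ multiplies $a$ in the first tensor factor while $b_{(1)}$ sits in the second, so the contraction you describe would transport $b$-dependence onto the $a$-strand rather than yield $a\otimes b$.

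After the \cite{DGP} identity is applied, all that remains is the scalar statement $\lambda(S(\Lambda_{(2)}))\Lambda_{(1)}=1_\Ho$---the $x=1_\Ho$ case of your relation---and even this requires an argument: the paper computes $S\bigl(\lambda(S(\Lambda_{(2)}))\Lambda_{(1)}\bigr)=(\lambda\otimes\Id_\Ho)\Delta(S(\Lambda))=\lambda(\Lambda)1_\Ho=1_\Ho$ using unimodularity and the right-integral property, and then uses invertibility of $S$. You assert the general inversion formula without proof. So two ingredients are missing: (i) the right-integral identity $\lambda(S(y_{(1)})\,x\,y_{(2)})=\lambda(x)\epsilon(y)$, and (ii) a proof of $\lambda(S(\Lambda_{(2)}))\Lambda_{(1)}=1_\Ho$. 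You are correct, though, that unibalancedness plays no role in this lemma.
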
 
\begin{proof}
First, we set $h=\lambda(S(\Lambda_{(2)}))\Lambda_{(1)}$ and prove that
\begin{equation}\label{E:lambdaSLam1}
h=\lambda(S(\Lambda_{(2)}))\Lambda_{(1)}=1_\Ho.
\end{equation}
Observe that 
\begin{align*}
S(h)=(\Id_\Ho\otimes \lambda)(S\otimes S)(\Delta(\Lambda))
&=(\Id_\Ho\otimes \lambda)(\Delta^{op}(S(\Lambda)))\\
&=( \lambda\otimes \Id_\Ho)(\Delta(S(\Lambda)))\\
&=\lambda(S(\Lambda))1_\Ho=\lambda(\Lambda)1_\Ho=1_\Ho.
\end{align*}
Since $\Ho$ is finite dimensional, the antipode $S:A\to A $  is invertible and
this computation implies Formula  \eqref{E:lambdaSLam1}.
 
For $x\in X$, let $L_x$ be the left action of $x$ on $\Ho$ or $\Ho^*$.  Note the identity \begin{equation}\label{E:flambdaredblue}
  \lev_\Ho \circ\ (\Id_{\Ho^*} \otimes\ L_x) \circ f_{\lambda \otimes 1_\Ho} = \lambda(x) \cdot \epsilon
\end{equation}
established in the proof of Lemma 3.8 of \cite{DGP}.  

Let $g$ be the morphism on the left side of Formula  \eqref{E:HypothOfdd}.   For any $x, y\in \Ho$,
\begin{align*}
g(x\otimes y)&=(\Id_\Ho\otimes \lev_\Ho)( \Lambda \otimes \Id_\Ho)(\Id_\Ho\otimes f_{\lambda \otimes 1_\Ho} \otimes \Id_\Ho)(x\otimes \Delta(y))\\
&= \Lambda_{(1)}x \otimes \left(\lev_\Ho \big(L_{\Lambda_{(2)}} \otimes \Id_{\Ho} \big)f_{\lambda \otimes 1_\Ho}( y_{(1)})\right)y_{(2)}  \\
&= \Lambda_{(1)}x \otimes \left(\lev_\Ho \big(\Id_{\Ho^*}\otimes L_{S(\Lambda_{(2)})}  \big) f_{\lambda \otimes 1_\Ho}(y_{(1)})\right)y_{(2)}  \\
&= \Lambda_{(1)}x \otimes \lambda(S(\Lambda_{(2)}))  \epsilon(y_{(1)})y_{(2)}  \\
&= \lambda(S(\Lambda_{(2)}))\Lambda_{(1)}x \otimes  y \\
&=x\otimes y
\end{align*}
where the fourth equality follows from Formula  \eqref{E:flambdaredblue}, the fifth equality follows from  the formula $(\epsilon \otimes \Id_\Ho)\Delta=\Id_\Ho$ in the definition of the Hopf algebra~$A$,  and the last equality follows from Formula ~\eqref{E:lambdaSLam1}.  
\end{proof}
Theorem \ref{T:ExHopf} directly follows from the results of this subsection.

\subsection{Proof of Theorem \ref{T:K=KInvlutive}}\label{SS:ProofKuperbergInv}
In Section~\ref{SS:ProofOfThmEXHopf}  we showed that the category  $\Ho\text{-mod}$ satisfies the assumptions of Theorems \ref{T:ExtenFtoRedBlue} and \ref{T:MainExKup}.  Thus we have:
\begin{theorem}
The category of finite dimensional modules over a finite dimensional unibalanced unimodular pivotal Hopf algebra $\Ho$ gives rise to an invariant of bichrome handlebody graphs $F'$ and an   invariant of 3-manifolds $\Kuni_{\Ho\text{-mod}}$.  
 \end{theorem}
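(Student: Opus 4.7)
The plan is to observe that this statement is a direct corollary assembling the pieces established earlier in the section together with Theorems \ref{T:ExtenFtoRedBlue} and \ref{T:MainExKup}, so the proof is simply to check that every hypothesis of those theorems holds for $\cat = \Ho\text{-mod}$.

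First I would verify the categorical backdrop: since $\Ho$ is a finite-dimensional pivotal Hopf algebra, the category $\Ho\text{-mod}$ of finite-dimensional left $\Ho$-modules is a pivotal $\kk$-category (the pivotal structure comes from the group-like element $g\in\Ho$ with $S^2=\operatorname{Ad}_g$). Next, I would note that $\Proj\subset\Ho\text{-mod}$ is an ideal, as recorded in Section~\ref{SS:ProjOb}, and that the regular representation $\Ho$ is a generator of $\Proj$ by the lemma proved at the start of Section~\ref{SS:ProofOfThmEXHopf} (using the Krull--Schmidt property of $\Ho\text{-mod}$ and the fact that every indecomposable projective is a direct summand of $\Ho$).

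Second, I would invoke Theorem~\ref{T:ExHopf}, whose proof occupies most of Section~\ref{SS:ProofOfThmEXHopf}: the hypothesis that $\Ho$ is unibalanced and unimodular supplies a non-degenerate m-trace $\mt$ on $\Proj$ (normalized by $\mt_\Ho(f_\Lambda\circ\epsilon)=1$ as in that section), and Lemma~\ref{L:Def-d-forHopf} produces a chromatic morphism $\dd$ on the generator $\Ho$ of $\Proj$ built from the right integral $\lambda\in\Ho^*$. Thus all algebraic data required by Theorems~\ref{T:ExtenFtoRedBlue} and~\ref{T:MainExKup} are now in place for $\cat=\Ho\text{-mod}$ with ideal $\Proj$ and generator $\Ho$.

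Finally, I would apply Theorem~\ref{T:ExtenFtoRedBlue} verbatim to extend $F'$ from $\I$-colored graphs on multi-handlebodies to admissible bichrome handlebody graphs in $\Ho\text{-mod}$, and then invoke Theorem~\ref{T:MainExKup} to conclude that, for any bichrome diagram $(H,\Gamma)$ coming from a Heegaard diagram of a closed connected oriented 3-manifold $M$, the scalar $F'(H,\Gamma)\in\kk$ depends only on the diffeomorphism class of $M$. We therefore set $\Kuni_{\Ho\text{-mod}}(M):=F'(H,\Gamma)$, which defines the desired 3-manifold invariant. There is essentially no obstacle here; all nontrivial content was already absorbed into Theorem~\ref{T:ExHopf} (existence of the m-trace and the chromatic morphism) and the general machinery of Theorems~\ref{T:ExtenFtoRedBlue} and \ref{T:MainExKup}, so this theorem is obtained by merely collecting their conclusions.
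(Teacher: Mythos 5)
Your proposal is correct and follows exactly the paper's own argument: the paper likewise presents this theorem as an immediate consequence of Theorem \ref{T:ExHopf} (established in Section \ref{SS:ProofOfThmEXHopf}, supplying the non-degenerate m-trace on $\Proj$ and the chromatic morphism on the generator $\Ho$) combined with the general machinery of Theorems \ref{T:ExtenFtoRedBlue} and \ref{T:MainExKup}. Nothing is missing.
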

 We    show how to  compute   $\Kuni_{\Ho\text{-mod}} (M)$ for a closed connected oriented 3-manifold~$M$.  We will  use the generator $G=\Ho$ and choose $\IP$ to be the ribbon graph formed by the braid closure of the
coupon filled with $h=f_\Lambda \circ \epsilon$, see Section \ref{SS:InvKuni}.   Let $(H_\alpha,\Gamma)$ be a bichrome diagram determined by a Heegaard splitting $M=H_\alpha \cup_{\Sigma} H_\beta$ with lower and upper minimal reducing sets of circles $\{\alpha_1,...,\alpha_g\}$ and $\{\beta_1,...,\beta_g\}$, respectively.   Using the blue graph $\IP$ and the chromatic morphism we change all the red circles $\{\beta_i\}$ into a blue graph.   Now $\{\alpha_i\}$ is a minimal reducing set of circles on $H_\alpha$ and so using Property (4) of Theorem~\ref{P:F'onProjHandlebodies} we can cut along the discs in $H_\alpha$ bounded by these circles to obtain a graph $\Gamma'$ in the boundary of the 3-ball $B^3$.  By definition, 
 \begin{equation}\label{E:Kuni=B3Gam'}
 \Kuni_{\Ho\text{-mod}}(M)=F'(B^3,\Gamma'), 
 \end{equation}  see Figure \ref{F:Computation} for an example.   
 \begin{figure}[htbp]
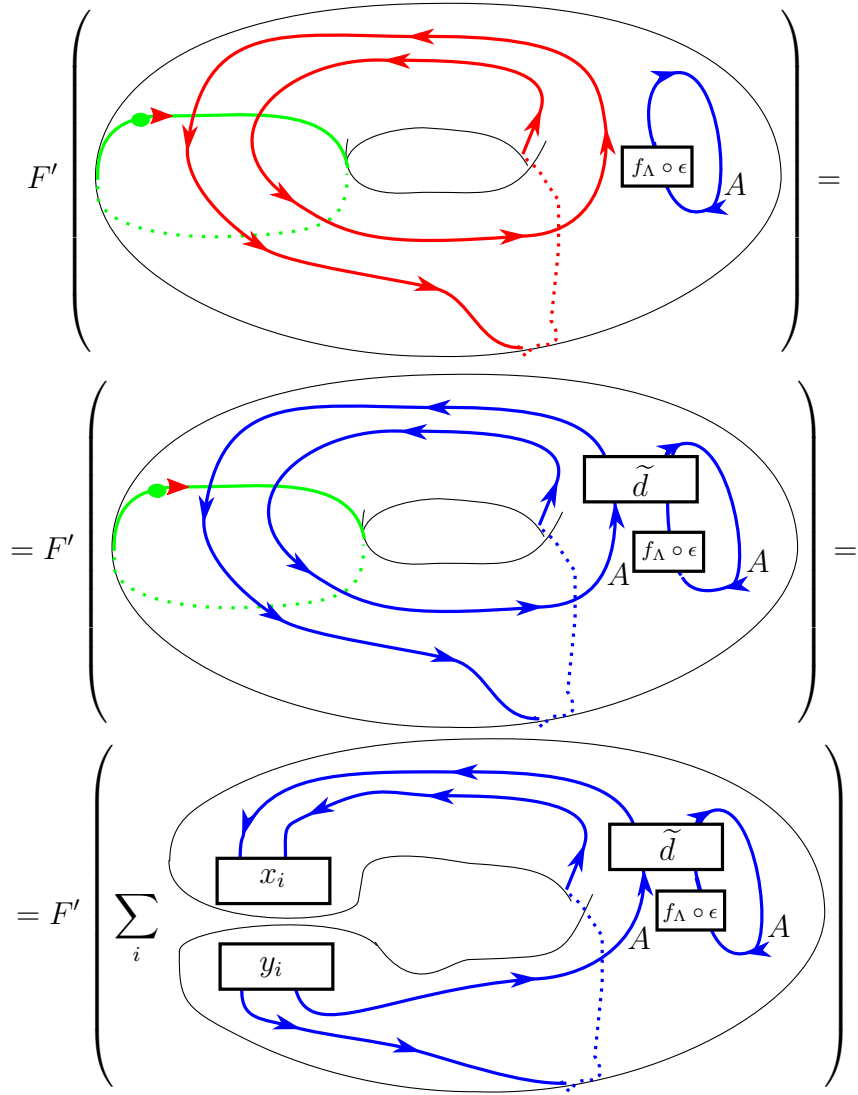

\begin{center}
$$
F'\left(\epsh{fig21-francois}{26ex}\put(-18,-2){\Ho}\put(-47,5){\tiny{$f_\Lambda\circ \epsilon$}}\right)=$$
$$
=F'\left(\epsh{fig22-francois}{26ex}\put(-16,-4){{\Ho}}\put(-49,1){\tiny{$f_\Lambda\circ \epsilon$}}\put(-52,20){$\dd$}\put(-60,-8){\Ho}\right)=
$$
$$
=F'\left(\sum_i\epsh{fig23-francois}{26ex}\put(-18,-4){\Ho}\put(-51,2){\tiny{$f_\Lambda\circ \epsilon$}}\put(-52,20){$\dd$}\put(-62,-8){{\Ho}}\put(-178,13){$x_i$}\put(-178,-15){$y_i$}\right)
$$

\caption{Computation of   $\Kuni_{\Ho\text{-mod}}$ for the lens space $L(2,1)$. The first equality is obtained  using the chromatic morphism to transform the red circle
into a blue graph. Then cutting along the meridian (depicted on the left) we obtain  a graph $\Gamma'\subset \partial B^3$.  By definition,  $\Kuni_{\Ho \text{-mod}}(L(2,1))=F'(B^3;\Gamma')$. }
\label{F:Computation}
\end{center}
\end{figure}

In the rest of the section, we assume  $\Ho$ to be  involutive so that $S^2={\rm {id}}_A$ and compute $\Kuni_{\Ho\text{-mod}}(M)$.  The involutivity of $\Ho$ implies that we can choose the pivotal structure of $\Ho$-mod to be trivial and then there is a forgetful pivotal functor from $\Ho$-mod to the category of vector spaces $\Vec$.   
We now state  two lemmas.  The proof of the first is straightforward and we leave it to the reader.  

\begin{lemma}\label{L:ForgetFunctorRT}
  We have the following commutative diagram:
$$\begin{tikzpicture}
\matrix(m)[matrix of math nodes,
row sep=2.6em, column sep=4em,
text height=1.5ex, text depth=0.25ex]
{Rib_{\Ho\text{-mod}} & Rib_{\Vec}\\
\Ho\text{-mod} & \Vec\\};
\path[->,font=\scriptsize,>=angle 90]
(m-1-1) edge node[auto] {Forgetful} (m-1-2)
        edge node[auto] {$F$} (m-2-1)
(m-1-2) edge node[auto] {$F_\Vec$} (m-2-2)
(m-2-1) edge node[auto] {Forgetful} (m-2-2);
\end{tikzpicture}
$$
where the horizontal arrows represent the forgetful functors and the vertical arrows represent  the  pivotal Penrose functors from the categories of  planar colored ribbon graphs   to $\Ho$-mod and  $\Vec$ respectively.  
\end{lemma}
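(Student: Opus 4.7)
The plan is to verify that the forgetful functor $\Forgetful:\Ho\text{-mod}\to\Vec$ is a strict pivotal tensor $\kk$-functor and then conclude the commutativity of the diagram from the universal description of the Penrose functor.

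First I would use the involutivity of $\Ho$ to pin down the pivotal structure. Since $S^2=\Id_\Ho$, the element $g=1_\Ho$ is a valid pivotal element, and with this choice the $\Ho$-action on the dual $V^*$ of a module $V\in\Ho\text{-mod}$ is given by the standard contragredient action $(h\cdot\varphi)(v)=\varphi(S(h)v)$, while the canonical pivotal isomorphism $V\to V^{**}$ coincides with the usual one for vector spaces. With this choice the four duality morphisms $\coev_V,\ev_V,\tcoev_V,\tev_V$ in $\Ho\text{-mod}$ are given by the same linear-algebraic formulas as the corresponding duality morphisms for the underlying vector space of $V$; their only extra content is the verification that they are $\Ho$-linear. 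Consequently $\Forgetful$ sends them on the nose to the duality morphisms in $\Vec$. Combined with the strict preservation of $\otimes$ and $\unit$, this shows that $\Forgetful:\Ho\text{-mod}\to\Vec$ is a strict pivotal tensor $\kk$-functor.

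Next I would recall that the Penrose functor of a pivotal $\kk$-category is uniquely determined: it is constructed by decomposing any planar ribbon graph into elementary pieces (caps, cups, identity bands and coupons) and assigning to each piece the corresponding structural morphism or coupon label, then composing and tensoring according to the combinatorics of the graph. The top horizontal arrow in the diagram is pure relabelling: an edge colored by $V\in\Ho\text{-mod}$ becomes an edge colored by $\Forgetful(V)\in\Vec$, and a coupon filled with $f$ becomes a coupon filled with $\Forgetful(f)$. Since $\Forgetful$ is strict pivotal, applying it after $F$ yields exactly the same assignment of morphisms to elementary pieces as $F_\Vec$ applied after the top forgetful functor, and the two compositions agree by an immediate induction on the decomposition of the ribbon graph.

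There is no substantive obstacle here; the only subtle point is the choice of pivotal element $g=1_\Ho$, which is only available when $S^2=\Id_\Ho$. Without this hypothesis the pivotal isomorphism $V\to V^{**}$ in $\Ho\text{-mod}$ would involve multiplication by $g$ and would differ from the canonical one in $\Vec$, so $\Forgetful$ would fail to be strictly pivotal and the square would only commute up to the isomorphisms supplied by $g$. The involutivity hypothesis is precisely what makes the diagram commute on the nose.
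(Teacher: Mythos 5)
Your proposal is correct and matches the paper's intent: the paper leaves this proof to the reader, but the surrounding text identifies exactly the same key point you do, namely that involutivity of $\Ho$ permits the trivial pivotal element $g=1_\Ho$, making the forgetful functor strictly pivotal, after which commutativity follows from the piecewise construction of the Penrose functor. Your closing remark about why the hypothesis $S^2=\Id_\Ho$ is essential is a useful elaboration of what the paper only asserts.
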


The next lemma says that evaluating red circles with the chromatic morphism is essentially the integral. 
This lemma directly  follows from the definition of the chromatic morphism and Formula  \eqref{E:flambdaredblue}.  

\begin{lemma}\label{lem:integralasd}
For all $x\in \Ho$ the following equality holds in $\Vec$:
$$
(\lev_{\Ho}\otimes \Id_\Ho)(\Id_{\Ho^*} \otimes L_x\otimes \Id_\Ho)(\Id_{\Ho^*}\otimes \dd)(\rcoev_\Ho \otimes \Id_\Ho)=\lambda(x) \Id_\Ho .
$$
This equality   is represented pictorially as  
 $$
\epsh{fig29-francois}{12ex}\put(-20,3){{\small $\wt d$}}\put(-26,22){$x$}=\lambda(x)\epsh{fig30-francois}{12ex}
$$
where all   strands are colored with  $\Ho$ and the thick dot   is labeled with $x$.
\end{lemma}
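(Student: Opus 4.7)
The plan is to substitute the explicit formula for the chromatic morphism $\dd$ from Lemma~\ref{L:Def-d-forHopf} into the left-hand side and carry out two simplifications in turn: a zig-zag collapse, and an application of Formula~\eqref{E:flambdaredblue}, followed by the counit axiom.

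First, I would replace $\dd$ by $(\rev_\Ho \otimes \Id_{\Ho\otimes \Ho})(\Id_{\Ho}\otimes f_{\lambda,1_\Ho}\otimes \Id_\Ho)(\Id_\Ho\otimes \Delta)$ and examine the partial composition $(\Id_{\Ho^*}\otimes \dd)(\rcoev_\Ho \otimes \Id_\Ho):\Ho \to \Ho^*\otimes\Ho\otimes\Ho$. In the resulting diagram the outer $\rcoev_\Ho$ and the $\rev_\Ho$ hidden inside $\dd$ sit on the same copy of $\Ho$, separated only by $\Id_\Ho \otimes f_{\lambda,1_\Ho} \otimes \Id_\Ho$. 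Sliding the factor $f_{\lambda,1_\Ho}\otimes \Id_\Ho$ through the cap created by $\rcoev_\Ho$ and then invoking the snake identity for the duality pair $(\rcoev_\Ho,\rev_\Ho)$ in $\Vec$ collapses the cap and identifies this partial composition with $(f_{\lambda,1_\Ho} \otimes \Id_\Ho)\circ \Delta$.

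Second, I would post-compose with $(\lev_\Ho \otimes \Id_\Ho)\circ (\Id_{\Ho^*} \otimes L_x \otimes \Id_\Ho)$. Since these act only on the first two tensor factors of $\Ho^*\otimes \Ho \otimes \Ho$, the full left-hand side of the claimed identity becomes $\bigl((\lev_\Ho \circ (\Id_{\Ho^*} \otimes L_x) \circ f_{\lambda,1_\Ho})\otimes \Id_\Ho\bigr)\circ \Delta$. Formula~\eqref{E:flambdaredblue} identifies the bracketed expression with $\lambda(x)\cdot \epsilon$, and the counit axiom $(\epsilon \otimes \Id_\Ho) \circ \Delta = \Id_\Ho$ then yields $\lambda(x)\cdot \Id_\Ho$, as desired.

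The argument is essentially mechanical once the definition of $\dd$ is unpacked; the only place where one must be careful is the bookkeeping of which copies of $\Ho$ and $\Ho^*$ are paired by the coevaluation and evaluation, so that the zig-zag identity is applied to the correct strands. Note that no extra hypothesis on $\Ho$ (such as involutivity) enters the argument, which is consistent with the statement being valid in $\Vec$ for any finite-dimensional unibalanced unimodular pivotal Hopf algebra.
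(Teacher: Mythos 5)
Your proof is correct and is precisely the computation the paper leaves implicit: it states only that the lemma ``directly follows from the definition of the chromatic morphism and Formula~\eqref{E:flambdaredblue}'', and your unpacking (substitute the formula for $\dd$, collapse the $\rcoev_\Ho$/$\rev_\Ho$ pair by the zig-zag identity to get $(f_{\lambda,1_\Ho}\otimes\Id_\Ho)\circ\Delta$, then apply \eqref{E:flambdaredblue} and the counit axiom) is exactly that argument.
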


Now we can  compute $\Kuni_{\Ho\text{-mod}}(M)$  (for involutive   $\Ho$) and prove Theorem \ref{T:K=KInvlutive}.  By Formula  \eqref{E:Kuni=B3Gam'}, we need to compute  $F'(B^3,\Gamma')$.  Let $T_\Ho$ be a (1,1)-ribbon graph whose closure is $\Gamma'$.    Formula  \eqref{E:DefF'} implies that 
$$
F'(B^3,\Gamma')=\mt_\Ho(F(T_\Ho)).
$$ 
Recall the  $\Ho$-mod morphism  $f_\Lambda \circ \epsilon$ defined at the beginning of Section~\ref{SS:ProofOfThmEXHopf}.  Since $\mt_\Ho(f_\Lambda \circ \epsilon)=1$, Theorem \ref{T:K=KInvlutive} follows from the next claim:
   \begin{claim}\label{C:FT_Ho=Kup} $F(T_\Ho)= \Kup_\Ho(M) \cdot (f_\Lambda \circ \epsilon).$ 
   \end{claim}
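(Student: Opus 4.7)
The plan is to reduce Claim \ref{C:FT_Ho=Kup} to a tensor-network identity comparing our Penrose evaluation of $T_\Ho$ with Kuperberg's combinatorial formula, exploiting involutivity of~$\Ho$ to work directly in~$\Vec$.

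First, I would choose the $(1,1)$-graph $T_\Ho$ so that its open edge is the edge of $\IP$ carrying the coupon $h = f_\Lambda \circ \epsilon$; this is possible because $\IP$ contains this coupon on one of its strands. Since $h$ factors through the unit object $\kk$, the ribbon graph $T_\Ho$ splits cleanly: the open strand enters, is terminated by~$\epsilon$, and reemerges on the other side via~$f_\Lambda$, while the remaining portion is a closed ribbon graph in $S^2$. Let $s \in \kk$ denote its Penrose evaluation. Then $F(T_\Ho) = s \cdot (f_\Lambda \circ \epsilon)$, so the claim reduces to showing $s = \Kup_\Ho(M)$. As a sanity check, $\mt_\Ho(F(T_\Ho))=s$ and by Formula~\eqref{E:Kuni=B3Gam'} also equals $\Kuni_{\Ho\text{-mod}}(M)$, so the identity $s=\Kup_\Ho(M)$ is equivalent to Theorem~\ref{T:K=KInvlutive}.

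Next, by Lemma \ref{L:ForgetFunctorRT} together with involutivity (which makes the pivotal structure on $\Ho\text{-mod}$ trivial and the forgetful functor pivotal), I may compute $s$ in $\Vec$ via $F_\Vec$. I then unpack the local contributions. Unimodularity gives $\dim_\kk\Hom_{\Ho\text{-mod}}(\unit,\Ho)=1$, so $\Omega_\Ho = f_\Lambda \otimes_\kk \epsilon$; hence each $\alpha_i$-cut inserts~$\Lambda$ on one side of the cut disc and closes off the other side by $\epsilon$. The strands of $\IP$ crossing the cut inject this~$\Lambda$ into a branching network, whose branchings (read via the pivotal Penrose calculus in $\Vec$) are exactly the iterated coproducts $\Delta^{n_i-1}(\Lambda)$, where $n_i=|\alpha_i\cap (\cup_j\beta_j)|$. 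By Lemma \ref{lem:integralasd}, each $\dd$-coupon at a former $\beta_j$-circle evaluates to $\lambda$ applied to the cyclically ordered product along $\beta_j$ of the labels of the strands it encloses.

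The main step is then to match this tensor network with the Kuperberg one. Recall that for involutive~$\Ho$, $\Kup_\Ho(M)$ is obtained from the Heegaard diagram $(\Sigma,\{\alpha_i\},\{\beta_j\})$ by placing $\Delta^{n_i-1}(\Lambda)$ at each $\alpha_i$, distributing its tensor factors to the intersection points along $\alpha_i$, and at each $\beta_j$ applying $\lambda$ to the cyclically ordered product of incident tensor factors; involutivity makes antipode corrections at oppositely oriented intersections trivial. Our tensor network and Kuperberg's are then literally the same sum of products of structure constants indexed by the same combinatorial data $\{\alpha_i\cap\beta_j\}$. The main obstacle is the orientation bookkeeping: one must verify that the cyclic order along each $\beta_j$ induced by traversing the Penrose diagram coincides with Kuperberg's cyclic order, and that the branching pattern of the $\IP$-strands at each $\alpha_i$-cut disc reproduces the iterated coproduct $\Delta^{n_i-1}(\Lambda)$ in the correct order. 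In the involutive case this reduces to a direct local check at each intersection point $\alpha_i\cap\beta_j$, from which $s=\Kup_\Ho(M)$ and the claim follow.
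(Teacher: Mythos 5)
Your overall route is the paper's: push the computation into $\Vec$ via Lemma \ref{L:ForgetFunctorRT}, replace each $\alpha_i$-cut by the left action of $\Delta^{n_i-1}(\Lambda)$ (Lemma \ref{L:LP=LP'}), evaluate each former $\beta_j$-circle by Lemma \ref{lem:integralasd} as $\lambda$ applied to the ordered product of the incident cointegral factors, and compare the resulting state sum with Kuperberg's. Two of your intermediate assertions are wrong as stated, however. First, after cutting $\Gamma'$ at the edge carrying $h=f_\Lambda\circ\epsilon$, the complement of that coupon in $T_\Ho$ is \emph{not} a closed ribbon graph: the strand issuing from $f_\Lambda$ runs through all the $\dd$-coupons (to which the former $\beta_j$-circles are attached) before reaching the free end, so your scalar $s$ is not the Penrose evaluation of a disjoint closed graph. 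The proportionality $F(T_\Ho)=s\cdot(f_\Lambda\circ\epsilon)$ can be salvaged because $\Hom_{\Ho\text{-mod}}(\unit,\Ho)$ is one-dimensional and spanned by $f_\Lambda$, so $(\text{rest})\circ f_\Lambda$ is a multiple of $f_\Lambda$; but then identifying $s$ requires exactly the bead computation you perform afterwards, so the claimed ``clean splitting'' does no work.

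Second, and more seriously: involutivity gives $S^2=\Id$, not $S=\Id$, so it does \emph{not} make the antipode corrections at oppositely oriented intersection points trivial. In the evaluation of $F_\Vec$ the cut along $\alpha_j$ contributes the bead $S^{p_k}(\Lambda_{(k)})$ with $p_k\in\{0,1\}$ recording the local intersection sign at the $k$-th crossing, and Kuperberg's formula likewise inserts an antipode at each negatively oriented crossing; matching these two systems of antipodes is precisely the content of the comparison, and dropping them changes the state sum for non-cocommutative $\Ho$. What involutivity actually buys is that no higher powers of $S$ (no framing corrections as in the non-involutory theory) occur, which is why the two formulas can agree on the nose. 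So the ``direct local check'' you defer is where the substance lies, and the reason you give for it being trivial is incorrect.
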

 \begin{proof}[Proof of Claim \ref{C:FT_Ho=Kup}]
 Lemma \ref{L:ForgetFunctorRT} implies that $$\Forgetful(F(T_\Ho))=F_{\Vec}(\Forgetful(T_\Ho)).$$ 
 Thus, it suffices to prove that 
   \begin{equation}\label{E:FVec=Kup}
   F_{\Vec}(\Forgetful(T_\Ho))= \Kup_\Ho(M) \cdot \Forgetful (f_\Lambda \circ \epsilon)
   \end{equation}
    since $\Forgetful$ is a faithful functor.   In the rest of the proof, we   work in the category  $\Vec$. This allows  us to consider the left multiplication by an element of $ \Ho$ as a morphism in $\Vec$.  To simplify notation we identify each morphism in $\Ho-$mod with its underlying linear map.  

 \begin{figure}[htbp]
\begin{center}
$$
\Forgetful\left(F\left(\sum_i \epsh{fig24-francois}{26ex}\put(-38,-12){\Ho}\put(-47,4){\tiny{$f_\Lambda\circ \epsilon$}}\put(-52,20){$\dd$}\put(-62,-8){{\Ho}}\put(-178,13){$x_i$}\put(-178,-15){$y_i$}\right)\right)=$$
$$=F_{\Vec}\left(\sum_i\epsh{fig25-francois}{26ex}\put(-26,-12){{\Ho}}\put(-38,7){\tiny{$f_\Lambda\circ \epsilon$}}\put(-57,-10){\Ho}\put(-42,25){$\dd$}\put(-178,13){$x_i$}\put(-178,-15){$y_i$}\right)=$$
$$
=F_{\Vec}\left(\hspace{25pt}\epsh{fig26-francois}{26ex}\put(-30,-10){\Ho}\put(-41,7){\tiny{$f_\Lambda\circ \epsilon$}}\put(-50,25){$\dd$}\put(-55,-10){{\Ho}}\put(-172,13){$S(\Lambda_{(2)})$}\put(-222,15){$S(\Lambda_{(1)})$}\right)=
$$
$$
=F_{\Vec}\left(\epsh{fig27-francois}{15ex}\put(-28,-28){\Ho}\put(-60,30){{\tiny $S(\Lambda_{(2)})S(\Lambda_{(1)})$}}\put(-40,-13){{\tiny{$f_\Lambda\circ \epsilon$}}}\put(-45,3){$\dd$}\put(-55,-30){{\Ho}}\right)=\lambda\left(S(\Lambda_{(2)})S(\Lambda_{(1)})\right)\cdot(f_\Lambda\circ \epsilon)
$$

\caption{In Figure \ref{F:Computation} we showed $\Kuni_{\Ho\text{-mod}}(L(2,1))=F'(B^3,\Gamma')$; here we continue this computations when $\Ho$ is involutive.
The first drawing depicts the 1-1 tangle obtained by cutting $\Gamma'$ along the black disc.  In the first equality, we pass to the category $\Vec$ using Lemma \ref{L:ForgetFunctorRT}. Since $\Vec$ has a symmetric braiding, the crossings make sense. The third equality re-expresses the cutting through beads (see Lemma \ref{L:LP=LP'}).
Finally, we collect the beads and apply Lemma \ref{lem:integralasd}. }
\label{F:ComputationVec}
\end{center}
\end{figure}    
Recall that  cutting at one of the $\alpha$-circles,  we replace the blue circles crossing it  
 with an ordered   pair of coupons filled with a sum $\sum_i x_i\otimes_\kk y_i$, see  Formula  \eqref{E:OmegaDefMuilt}.   By  Lemma \ref{L:LP=LP'} the corresponding  morphism is   the left action by $\Lambda$.  We provide  each  circle $\alpha_i$ with an orientation and a base point.   As above, we  use the chromatic morphism to change all   red circles $\{\beta_i\}$ into blue graphs (this yields  an orientation on each   $\beta_i$).  Instead of cutting the Heegaard surface along the   circles $\{\alpha_i\}$ we decorate the blue graph with certain  elements of $\Ho$ called beads. Namely, each circle $\alpha_j$ intersects the   upper circles $\{\beta_i\}$ transversely, and we let $c_1, . . . , c_m$ be these intersections enumerated  in the order they are met when     traveling  along $\alpha_j$ in the positive direction starting and ending in  the base point.  For  $k=1,..., m$, set $p_k=0$ if the tangent vectors of  $\alpha_j$ and the  circle $\beta_i$ meeting $\alpha_j$ at $c_k$  (taken in this order)  form a positively oriented basis in the tangent space at $c_k$; otherwise set $p_k=1$.
We assign to each $c_k$ the \lq\lq bead'' $S^{p_k}(\Lambda_{(k)})$ where 
$\Delta^m(\Lambda)=\Lambda_{(1)}\otimes...\otimes  \Lambda_{(m)}$.

Since $\Vec$ has a trivial ribbon structure,   the left hand side of Formula  \eqref{E:FVec=Kup} depends  only  on 
the abstract graph of $T_\Ho$ or, equivalently, on $\Gamma'$.  
 Thus, we can compute $F_{\Vec}(\Forgetful(T_\Ho))$ as follows.  Each upper circle $\beta_i$ has an orientation and a base point (determined by where the chromatic morphism is applied).  Starting at this point and following the orientation we collect the beads to obtain a word $a_i$ of $\Ho$ written from right to left. Doing this for all upper circles we obtain $g$ beads:  $a_1,...,a_g$.  
For each $\beta_i$, apply Lemma \ref{lem:integralasd} where $x$ is the bead $a_i$ to obtain 
$$F_{\Vec}(\Forgetful(T_\Ho))=\lambda(a_1)\lambda(a_2)...\lambda(a_g)\cdot (f_\Lambda \circ \epsilon).$$
For an example of this computation see Figure \ref{F:ComputationVec}.  
 But since $\Ho$ is involutive,  the definition of the Kuperberg invariant implies  that   
 $$
\Kup_\Ho(M)= \lambda(a_1)\lambda(a_2)...\lambda(a_g).
 $$   
This implies our claim above and  completes the proof of the theorem.  
 \end{proof}

\section{Proof of Theorem \ref{T:Kuni=TV}} \label{S:ProofTVinv}

In this section we will   use  notation of Section \ref{SS:TVinvIntro}.   
  Let $\mathcal T$ be a triangulation of a closed connected oriented $3$-manifold $M$   and let $t$ be a maximal tree of edges of $\mathcal T$. Clearly,  $t$ contains all the vertices of $\mathcal T$.
Let $H_\beta$ be a regular neighborhood of the $1$-skeleton of $\mathcal T$ in~$M$ and let $H_\alpha=M \setminus ({\rm {Int}} (H_\beta)$. Then $M=H_\alpha \cup H_\beta$ is a Heegaard splitting of~$M$.  The $\beta$-circles are meridians of the edges of $\mathcal T$ not in the tree $t$ and the $\alpha$-circles bound discs lying in the 2-dimensional faces of $\mathcal T$.
  As in Section \ref{SS:HeegSplitting} this Heegaard diagram gives a bichrome handlebody graph $(H_\alpha,\Gamma)$ where the red graph consists of  the $\beta$-circles.  

 By the definition of the chromatic morphism, the value of a red unknot
 is
 the  dimension $\mathcal D \neq 0 $ of $\cat$ (see Formula  \eqref{eq:dimC}).
Thus, the value of $F'(H_\alpha,\Gamma)$ does not change if we multiply it by $1/\mathcal D $ and at the same time add a red unknot to the bichrome handlebody graph $(H_\alpha,\Gamma)$.  With this in mind, we construct a new bichrome handlebody graph as follows.  Starting with $(H_\alpha,\Gamma)$ place a red unknot on the boundary of the neighborhood of each edge of the tree $t$.   Let $e$ be a leaf of $t$, i.e.\ an edge of $t$ such that one of its vertices is not adjacent to other edges of~$t$.  Then we can slide the new red unknot associated to $e$ over   the red meridians of all other edges adjacent to this vertex.  The resulting unknot is  a red meridian around $e$.  Continuing this process on the leaves of $t\setminus \{e\}$ etc., we obtain a bichrome handlebody graph $(H_\alpha,\Gamma')$  such that  each edge of $\mathcal T$ has a red meridian going around this edge.  Using this construction and  the sliding property of $F'$   for red circles, we get 
\begin{equation}\label{E:KuniHGG'}
\Kuni_\cat(M)=F'(H_\alpha,\Gamma)=\frac{1}{\mathcal D ^{v-1}}F'(H_\alpha,\Gamma')
\end{equation}
where $v$ is the number of vertices of $\mathcal T$.  

Now we compute $F'(H_\alpha,\Gamma')$.  Use the chromatic morphism to make all the red circles blue.  By definition of the chromatic morphism, each red circle is changed to a blue circle colored with $G$ or, equivalently, colored with the weighted sum $\sum_i \qdim(S_i) S_i$ (note this happens with  a meridian of each edge of $\mathcal T$).  Furthermore,  cutting along the discs formed by the 2-dimensional faces of $\mathcal T$ we obtain a set of
spherical
tetrahedra (indexed by the set of tetrahedra of $\mathcal T$)  whose edges are all colored by $G$ and whose four 3-legged coupons are filled with morphisms coming from the cutting.  Since $G$ splits as an orthogonal direct sum of simple objects, each of these
spherical
 tetrahedra is
 a
 sum indexed by colorings of the edges of $T$ by elements of the set $\{S_i\}$.  Moreover, each component of this sum is 
proportional to the corresponding   $6j$-symbol.  Therefore,  
$$ F'(H_\alpha,\Gamma')=  {\mathcal D ^{v}}\,\,
\TV(M)$$
where $\TV(M) $ is the Turaev-Viro invariant of~$M$ computed  on  the triangulation $\mathcal T$ of $M$, see \cite{Tu}.  Combining this with Formula  \eqref{E:KuniHGG'}, we obtain that  $ {\mathcal D }^{-1} \Kuni_\cat(M)=\TV(M)$.

\linespread{1}


\begin{thebibliography}{99}

\bibitem{BW1} J. Barrett, B.  Westbury -- {\em Invariants of Piecewise-Linear 3-Manifolds.} {
    Transactions of the American Mathematical Society}   \textbf{348} (10), 3997--4022.
  
\bibitem{BW2} J. Barrett, B.  Westbury -- {\em Spherical categories.} {
    Adv. Math.}  \textbf{143} (1999), 357--375.

 \bibitem{BBG17b}
 A. Beliakova, C. Blanchet, A. Gainutdinov --
 \textit{Modified Trace is a Symmetrised Integral.} {arXiv:1801.00321}.
 
 \bibitem{BBG17a} A. Beliakova, C. Blanchet, N. Geer --
\textit{Logarithmic Hennings Invariants for Restricted Quantum $\mathfrak{sl}(2)$}.
\href{https://arxiv.org/abs/1705.03083}{arXiv:1705.03083}.


\bibitem{BC2003} D. Bulacu, S. Caenepeel -- \emph{Integrals for (dual) quasi-Hopf algebras. Applications.}  J. Algebra \textbf{266} (2) (2003) 552--583.

\bibitem{BC2012} D. Bulacu, S. Caenepeel -- \emph{On integrals and cointegrals for quasi-Hopf algebras.} J. Algebra \textbf{351} (2012), 390--425.

\bibitem{CGP14} F. Costantino, N. Geer, B. Patureau-Mirand -- 
\textit{Quantum Invariants of 3-Manifolds via Link Surgery Presentations and Non-Semi-Simple Categories.}  Journal of Topology \textbf{7} (2014), no.    4,  1005--1053.

\bibitem{DGP} M. De Renzi, N. Geer, B. Patureau-Mirand --
 \textit{Renormalized Hennings Invariants and 2+1-TQFTs.}
  Comm. Math. Phys. \textbf{362} (2018), no. 3, 855--907.

\bibitem{ENO} P. Etingof, D. Nikshych, V. Ostrik -- {\em On fusion
    categories.} Ann. of Math. (2) \textbf{162} (2005), no. 2, 581--642.



\bibitem{GKP1} N. Geer, J. Kujawa, B. Patureau-Mirand -- {\em    Generalized trace and modified dimension functions on ribbon    categories. } Selecta Mathematica, New Series \textbf{17} (2011), no. 2,   453--504.
  
\bibitem{GKP2} N. Geer, J. Kujawa, B. Patureau-Mirand -- {\em
    Ambidextrous objects and trace functions for nonsemisimple
    categories.} Proc. Amer. Math. Society 
  \textbf{ 141} (2013), no. 9, 2963--2978.

\bibitem{GKP3} N. Geer, J. Kujawa, B. Patureau-Mirand -- {\em M-traces in (non-unimodular) pivotal categories.} preprint, arXiv:1809.00499.



 \bibitem{GPT2} N. Geer, B. Patureau-Mirand, V. Turaev -- \emph{Modified $6j$-Symbols and $3$-Manifold invariants.}  {Advances in Mathematics} \textbf{228} (2011), no. 2, 1163--1202. 

\bibitem{GPV} N. Geer, B.  Patureau-Mirand, A. Virelizier, --
  {\em Traces on ideals in pivotal categories. } {Quantum Topology}
  \textbf{4} (2013), no. 1, 91--124.
  
\bibitem{KV} R. Kashaev, A. Virelizier -- {\em Generalized Kuperberg invariants of 3-manifolds. } arxiv:1805.00413.

  
  \bibitem{Ku90} G. Kuperberg -- {\em Involutory Hopf algebras and 3-manifold invariants.}   Internat. J. Math. \textbf{2} (1991), no. 1, 41--66.
  
\bibitem{Ku97} G. Kuperberg -- {\em Non-involutory Hopf algebras and 3-manifold invariants.} Duke Math J. \textbf{84} (1996) (1) 83--129.

\bibitem{HN}   F. Hausser, F. Nill -- {\em Integral theory for quasi-Hopf algebras.} preprint, math.QA/9904164.

\bibitem{MM} H. Masur, Y. Minsky -- {\em Quasiconvexity in the
    curve complex.} in ``In the tradition of Ahlfors and Bers, III'',
  ed. W. Abikoff, A. Haas, Contemp. Math. 355, Amer. Math. Soc.
  (2004) 309--320.
  
\bibitem{PVO2000} F. Panaite, F. Van Oystaeyen  -- {\em Existence of integrals for finite dimensional quasi-Hopf algebras.} Bull. Belg. Math. Soc. Simon Stevin \textbf{7} (2000), no. 2, 261--264. 

\bibitem{Rad2011} D. Radford -- {\em Hopf Algebras.} Series on Knots
  and Everything: Volume 49 (2011).
  
\bibitem{SchZ2001} M Scheunert, R. B. Zhang -- {\em 
Invariant integration on classical and quantum Lie supergroups. }
J. Math. Phys. \textbf{42} (2001), no. 8, 3871--3897. 

\bibitem{SchZ2005}  M Scheunert, R. B. Zhang -- {\em Integration on Lie supergroups: a Hopf algebra approach.} J. Algebra \textbf{292} (2005), 324--342.

\bibitem{Sh2017} 
K. Shimizu -- {\em  Integrals for finite tensor categories.}
 arXiv:1702.02425.


\bibitem{Si} J. Singer -- {\em Three dimensional manifolds and their
    Heegaard diagrams.} Trans. Amer. Math. Soc. \textbf{35} (1933), 88--111.
  
\bibitem{Tu} V.  Turaev -- {\em Quantum invariants of knots and 3-manifolds.}
  de Gruyter Studies in Mathematics, 18. Walter de Gruyter \& Co., Berlin,
  (1994).
  
\bibitem{TuV}   V. Turaev, A. Virelizier, -- \emph{Monoidal categories and topological field theory.} Progress in Mathematics, 322. Birkhäuser/Springer (2017).
  
  \bibitem{TV} V. Turaev, O. Viro -- {\em State sum invariants of   $3$-manifolds and quantum $6j$-symbols.}  Topology \textbf{31} (1992), no. 4, 865--902.

 
\end{thebibliography}
\end{document}